\newtheorem{Theorem}{Theorem}[section]
\newtheorem{Lemma}[Theorem]{Lemma}
\newtheorem{Corollary}[Theorem]{Corollary}
\newtheorem{Proposition}[Theorem]{Proposition}
\newtheorem{Question}[Theorem]{Question}
\theoremstyle{definition}
\newtheorem{Notation}[Theorem]{Notation}
\newcommand{\Z}{\mathbb{Z}}
\def\Ker{\operatorname{Ker}}
\def\ke{\operatorname{ker}}
\def\imm{\operatorname{im}}
\def\Ass{\operatorname{Ass}}
\def\Supp{\operatorname{Supp}}
\def\lim{\operatorname{lim}}
\def\ar{\operatorname{ar}}
\def\gr{\operatorname{gr}}
\def\mm{{\frak m}}
\def\pp{{\frak p}}
\begin{document}
	
	\title[Generalized Cohen-Macaulay rings]{Koszul Homology Under Small Perturbations}
	\author{Van Duc Trung}
	\address{Department of Mathematics, Uiversity of Education, Hue University, 34 Le Loi, Hue, Vietnam}
	\email{vdtrung@hueuni.edu.vn}
	
	\thanks{2020 {\em Mathematics Subject Classification\/}: 13D02,13D10.\\} 
	\keywords{Koszul complex, small perturbation, filter regular sequence}
	
	\begin{abstract} Let $x_1,\ldots,x_s$ be a filter regular sequence in a local ring $(R,\mm)$. Denote by $R_{x_1,\ldots,x_s}$ the Koszul complex of $x_1,\ldots,x_s$ over $R$.  In this paper, we give an explicit number $N$ such that the sum of lengths $\sum_{i=1}^s (-1)^i\ell(H_i(R_{x_1,\ldots,x_s}))$ is preserved when we perturb the sequence $x_1, \ldots,x_s$ by $\varepsilon_1, \ldots, \varepsilon_s \in \mm^N$. Applying this result and the main Theorem of Eisenbud \cite{E}, we show that there exits $N >0$ such that for all $i \geq 1$ the length of $H_i(R_{x_1,\ldots,x_s})$  is preserved under small perturbation.
		
	\end{abstract}
	
	\maketitle
	
	\section{Introduction}
	This work is inspired by the recent intense activities in studying small perturbations. Taking a small perturbation arises naturally in studying deformations. One particular way to deform a singularity is by adding terms of high order to the defining equations. In this way we can transform a singularity defined analytically into an algebraic singularity by truncating the defining equations.
	
	Let $(R,\mm)$ be a Noetherian local ring, and let $I=(x_1,\ldots,x_s)$ be an ideal of $R$. We say that a sequence $x_1',\ldots,x_s'$ in 
	$R$ is a small perturbation of $x_1,\ldots,x_s$ (or of the ideal $I$) if, for each $i=1,\ldots,s$,
	$$x_i'-x_i \in  \mm^N$$
	for some fixed integer $N \gg 0$, where “sufficiently large” is understood to depend on the context. 
	
	The following natural question has been the subject of much investigation:
	
	\noindent \textbf{Question.} Which properties and invariants of $I$ are preserved under sufficiently small perturbations?
	
	In 1974, Eisenbud \cite{E} demonstrated how to control the homology of a complex under small perturbations. As an application, he showed that if $x_1,\ldots,x_s$ is a regular sequence, then the perturbed sequence $x_1+\varepsilon_1,\ldots,x_s+\varepsilon_s$ remains regular, provided the perturbation is sufficiently small. Huneke and Trivedi \cite{HT} extended this result to filter regular sequences. Recall that a sequence $x_1,\ldots,x_s$ is called filter regular if 
	$$\Supp \left( \frac{(x_1,\ldots,x_{i-1}) : x_i}{(x_1,\ldots,x_{i-1})} \right) \subseteq {\mm}$$
	for all $i = 1,\ldots,s$. In other words, the notion of a filter regular sequence generalizes the concept of a regular sequence.
	
	For numerical invariant, Srinivas and Trivedi \cite{ST1} showed that the Hilbert-Samuel function of $R/I$ with respect to an $\mm$-primary ideal $J$ does not change under small perturbations of $I$ if $R$ is a generalized Cohen-Macaulay ring and $I$ is generated by part of a system of parameters. Recently, Ma, Quy, and Smirnov \cite{MQS} extended this result to arbitrary local rings.
	
	The work of Ma, Quy and Smirnov \cite{MQS} leads to the question of whether the Hilbert-Samuel functions can be actually preserved under small perturbation. The least number $N$ such that the Hilbert-Samuel function of $R/I$ and $R/I'$ coincide for any $I' \equiv I$ mod $\mm^N$ is called the \textit{Hilbert perturbation index} of $R/I$. If $R$ is a Cohen-Macaulay ring, Srinivas and Trivedi \cite{ST2} gave an upper bound for the Hilbert perturbation index of $R/I$ in terms of the multiplicity. If $R$ is a generalized Cohen-Macaulay ring, inspired of the approach of Srinivas and Trivedi \cite{ST2}, Quy and the author \cite{QDT} gave an upper bound for the Hilbert perturbation index of $R/I$ in terms of the homological degree, a generalization of the multiplicity. However, this approach
	does not work in the general case, when R is an arbitrary local ring. Recently, by using a new approach, Quy and Trung \cite{QT} solved this problem in its generality.  
	
	Furthermore, Duarte \cite{LD1, LD2}  proved that the Betti numbers and local cohomology modules are also preserved by a sufficiently small perturbation under some suitable conditions.
	
	On the other hand, the Kozul complex is a fundamental notion and plays an important role in many areas of algebra and geometry. For this reason, it would be desirable to understand the behavior of Koszul complexes under small perturbations. 
	
	Let $x_1,\ldots,x_s$ be a sequence in the local ring $(R,\mm)$. We denote by $R_{x_1,\ldots,x_s}$ the Koszul complex of $x_1,\ldots,x_s$ over $R$. In this paper, we shall study the behavior of the Koszul homology modules $H_i(R_{x_1,\ldots,x_s})$ under small perturbations. It is well know that if $x_1,\ldots,x_s$ is a filter regular sequence, then $H_i(R_{x_1,\ldots,x_s})$ has finite length for $i=1,\ldots,s$. 
	
	Let $M$ be an arbitrary module over the local ring $(R,\mm)$. The Loewy length of $M$ is defined by
	$$\ell\ell_R(M) := \inf \{ n \geq 0  \ | \ \mm^nM=0 \}.$$
	
	If $I$ and $J$ are arbitrary ideals in the ring $R$, then by the  Artin-Rees lemma, there exists a number $c$ such that
	$$J^n \cap I = J^{n-c}(J^c\cap I)$$
	for all $n \geq c$. The smallest such number $c$ is called the Artin-Rees number of $I$ with respect to $J$, and is denoted by $\ar_J(I)$.
	
	Let $x_1,\ldots,x_s$ be a filter regular sequence in $R$. In \cite{QT}, the authors provide an explicit bound $N$, determined by the Loewy length and the Artin–Rees number, such that for every $\varepsilon_1, \ldots, \varepsilon_s \in \mm^N$, the perturbed sequence $x_1+\varepsilon_1,\ldots,x_s+\varepsilon_s$ remains filter regular. Building on the influential framework developed in \cite{QT}, which offers a powerful method for controlling perturbations of filter regular sequences, we establish the following main result.
	
	\noindent \textbf{Main Theorem 1} (Theorem \ref{main Theorem}). Let $x_1,\ldots,x_s$ be a filter regular sequence in $R$. For $i=1,\ldots,s$, let
	$$a_i = \ell\ell_R\left( \frac{(x_1,\ldots,x_{i-1}):x_i}{(x_1,\ldots,x_{i-1})}\right).$$
	Set $N=\max\{ a_1 + 2a_2 + \cdots + 2^{s-1}a_s, \ar_{\mm}(x_1),\ldots,\ar_{\mm}(x_1,\ldots,x_s)\} + 1$. Let $x_i' = x_i + \varepsilon_i$, where $\varepsilon_i$ is an arbitrary element in $\mm^N$, $i=1,\ldots,s$. Then
	$$\sum_{i=1}^s (-1)^i\ell(H_i(R_{x_1',\ldots,x_s'})) = \sum_{i=1}^s (-1)^i\ell(H_i(R_{x_1,\ldots,x_s})).$$

	From this result we obtain the following main result.
	
	\noindent \textbf{Main Theorem 2} (Theorem \ref{main Theorem 2}). Let $x_1,\ldots,x_s$ be a filter regular sequence in $R$. Then there exists a number $N$ such that for arbitrary elements $\varepsilon_1,\ldots,\varepsilon_s$ in $\mm^N$ we have
	$$\ell(H_i(R_{x_1',\ldots,x_s'})) = \ell(H_i(R_{x_1,\ldots,x_s})),$$
	for all $i \geq 1$, where $x_i'=x_i+\varepsilon_i, i=1,\ldots,s$.
	
	This paper is organized as follows. We prepare 
	some basic facts on Koszul complex and filter regular sequences in section 2. In section 3, beside the proof of the main Theorem we also give an upper bound for the Loewy length of Koszul homology modules.

	\noindent \textbf{Acknowledgements.} This work is supported by Vietnam National Program for the Development of Mathematics 2021-2030 under grant number B2023-CTT-03. It was initiated during a research stay of the author at Vietnam Institute for Advanced Study in Mathematics (VIASM) in 2023. The author sincerely thanks the referee for a thorough reading of the manuscript and for suggesting valuable improvements.

	\section{Preliminaries}
	
	Throughout this paper, we assume that all rings are commutative, noetherian with identity. In this section, we review some basic facts on Koszul complexes and filter regular sequences. 
	
	A complex $\underline{C}$ over ring $R$ is a sequence of $R$-modules an $R$-maps  
	$$\underline{C} = \cdots \rightarrow C_{n+1} \xrightarrow{f_{n+1}} C_n \xrightarrow{f_n} C_{n-1} \rightarrow \cdots, \quad n \in \Z, $$
	with $\imm f_{n+1} \subseteq \ke f_n$ for all $n$. The quotient module $\ke f_n/\imm f_{n+1}$ is called the $n$th homology module of $\underline{C}$, denoted by $H_n(\underline{C})$. Note that every $R$-module $M$ can be identified with the complex $\underline{C}$ such that $C_n = 0$ for all $n \neq 0$ and $C_0 = M$.
	
	Let $\underline{C}'$ and $\underline{C}''$ be complexes, the complex $\underline{C}' \otimes \underline{C}''$ is defined as follows: for all $n$
	$$(\underline{C}' \otimes \underline{C}'')_n = \sum_{p+q=n} C'_p \otimes C''_q,$$
	and the $n$th differentiation $f_n : (\underline{C}' \otimes \underline{C}'')_n \longrightarrow (\underline{C}' \otimes \underline{C}'')_{n-1}$ is given by
	
	$$f_n(c'_p \otimes c''_q) = f'_p(c'_p) \otimes c''_q + (-1)^pc'_p \otimes f''_q(c''_q).$$
	\vskip 0.2cm
	\noindent It is well known that $\underline{C}' \otimes \underline{C}'' \cong \underline{C}'' \otimes \underline{C}'$ and $\underline{C}' \otimes (\underline{C}'' \otimes \underline{C}''') \cong (\underline{C}' \otimes \underline{C}'')\otimes \underline{C}'''$.
	
	Let $R$ be a ring. The Koszul complex of an element $x$ in $R$ is defined by
	$$R_x : 0 \rightarrow C_1 \xrightarrow{x} C_0 \rightarrow 0,$$
	where $C_1 = C_0 = R$ and $C_1 \xrightarrow{x} C_0$ is multiplication by $x$. 
	
	Let $x_1,\ldots,x_s$ be a sequence in $R$. Since the complexes over $R$ form a multiplicative system under the operation of taking tensor products which is commutative and associative,  we define the Koszul complex of $x_1,\ldots,x_s$ over $R$ to be the complex
	$$R_{x_1,\ldots,x_s} = R_{x_1} \otimes \cdots \otimes R_{x_s}.$$
	
	Suppose $M$ is an $R$-module, the Koszul complex of $x_1,\ldots,x_s$ over $M$ is then defined by
	$$M_{x_1,\ldots,x_s} = M \otimes R_{x_1,\ldots,x_s}.$$
	
	\noindent We would like to mention here that the complex $M_{x_1,\ldots,x_s}$ is exactly the exterior algebra over $M$ generated by $x_1,\ldots,x_s$. The homology modules of the Koszul complex are called the Koszul homology modules. We note that $H_n(M_{x_1,\ldots,x_s}) = 0$ for $n <0$ and $n > s$. Moreover, it is clear to observe that 
	$$H_0(M_{x_1,\ldots,x_s})  = M/IM \ \text{and} \  H_s(M_{x_1,\ldots,x_s}) = (0:_M I),$$
	where $I$ is the ideal of $R$ generated by $x_1,\ldots,x_s$.
	
	We adopt the following property of Koszul complexes because it is useful for the next section (for more properties we refer to see \cite[section 6]{M} and  \cite[section 1]{MB}). Let $\underline{C}$ be a complex over $R$, we define the complex 
	$$\underline{C}_{x_1,\ldots,x_s} = \underline{C} \otimes R_{x_1,\ldots,x_s}.$$
	\begin{Proposition}\label{exact homology sequence 1} \cite[Prop 1.1]{MB}. Let $R$ be a ring. If $\underline{C}$ is a complex over $R$ and $x\in R$, then we have the following exact homology sequence 
		$$\cdots \rightarrow H_{n+1}(\underline{C}_x) \rightarrow H_n(\underline{C}) \xrightarrow{\partial_n} H_n(\underline{C}) \rightarrow H_n({\underline{C}}_x) \rightarrow \cdots$$ 
		where $\partial_n$ is multiplication by $(-1)^nx$.
	\end{Proposition}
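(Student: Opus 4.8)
The plan is to recognize $\underline{C}_x=\underline{C}\otimes R_x$ as (essentially) the mapping cone of multiplication by $x$ on $\underline{C}$ and then to extract the asserted sequence from the associated short exact sequence of complexes. First I would make the tensor product explicit. Since $R_x$ consists of a copy of $R$ in homological degrees $0$ and $1$ and vanishes elsewhere, the differential formula for tensor products recalled above yields $(\underline{C}_x)_n=C_n\oplus C_{n-1}$ with differential
$$d_n(a,b)=\left(f_n(a)+(-1)^{n-1}xb,\ f_{n-1}(b)\right),\qquad a\in C_n,\ b\in C_{n-1},$$
where $f$ denotes the differential of $\underline{C}$. A one-line check gives $d_{n-1}d_n=0$, the two cross terms cancelling because $(-1)^{n-1}+(-1)^{n-2}=0$.

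Next I would record the short exact sequence of complexes
$$0\To\underline{C}\xrightarrow{\iota}\underline{C}_x\xrightarrow{\pi}\underline{D}\To 0,$$
in which $\iota_n(a)=(a,0)$, $\pi_n(a,b)=b$, and $\underline{D}$ is the complex with $D_n=C_{n-1}$ and differential $f_{n-1}\colon C_{n-1}\to C_{n-2}$, so that $H_n(\underline{D})=H_{n-1}(\underline{C})$. The displayed form of $d_n$ shows immediately that $\iota$ and $\pi$ are chain maps and that the sequence is exact in each degree. Feeding this into the standard long exact homology sequence and substituting $H_n(\underline{D})=H_{n-1}(\underline{C})$ produces
$$\cdots\To H_n(\underline{C})\xrightarrow{\iota_*}H_n(\underline{C}_x)\xrightarrow{\pi_*}H_{n-1}(\underline{C})\xrightarrow{\delta}H_{n-1}(\underline{C})\xrightarrow{\iota_*}H_{n-1}(\underline{C}_x)\To\cdots,$$
which, after renaming the running index $n-1$ as $n$, has exactly the shape claimed in the proposition; it remains only to identify $\delta$.

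For this last point I would run the snake-lemma diagram chase. A class of $H_n(\underline{D})=H_{n-1}(\underline{C})$ is represented by a cycle $z\in C_{n-1}$ (i.e.\ $f_{n-1}(z)=0$); the element $(0,z)\in(\underline{C}_x)_n$ is a $\pi_n$-preimage of $z$, and
$$d_n(0,z)=\left((-1)^{n-1}xz,\ f_{n-1}(z)\right)=\left((-1)^{n-1}xz,\ 0\right)=\iota_{n-1}\!\left((-1)^{n-1}xz\right).$$
Since $(-1)^{n-1}xz$ is a cycle of $\underline{C}$, this shows $\delta$ is multiplication by $(-1)^{n-1}x$ on $H_{n-1}(\underline{C})$; under the renaming $n-1\mapsto n$ the corresponding map $\partial_n\colon H_n(\underline{C})\to H_n(\underline{C})$ in the proposition is therefore multiplication by $(-1)^n x$, as asserted.

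Every step is routine homological algebra. The only place that genuinely demands care is the sign bookkeeping: one must fix the differential of $\underline{D}$ so that $\pi$ is truly a chain map — the opposite shift convention would force a compensating sign into $\iota$ or $\pi$ — and then track the factor $(-1)^{n-1}$ accurately through the computation of the connecting homomorphism. I do not anticipate any essential obstacle beyond this.
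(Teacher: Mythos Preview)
Your argument is correct: identifying $\underline{C}_x$ with the mapping cone of multiplication by $x$, writing down the short exact sequence $0\to\underline{C}\to\underline{C}_x\to\underline{C}[1]\to 0$, and chasing the connecting homomorphism is exactly the standard route, and your sign bookkeeping checks out against the tensor-product differential given in the paper. Note, however, that the paper does not supply its own proof of this proposition --- it is quoted directly from \cite[Prop.~1.1]{MB} --- so there is nothing to compare your approach against; your write-up simply fills in the omitted classical argument.
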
 \label{exact homology sequence 2}
	Following Proposition \ref{exact homology sequence 1} we immediately obtain the following corollary.
	\begin{Corollary} \label{ex seq of homo} Let $R$ be a ring and $x_1,\ldots,x_s \in R$. For every $i = 1,\ldots,s$, we have the following exact sequence 
		$$\cdots \rightarrow H_{n+1}(R_{x_1,\ldots,x_s}) \rightarrow H_n(R_{x_1,\ldots,\widehat{x_i},\ldots,x_s}) \xrightarrow{\partial_n} H_n(R_{x_1,\ldots,\widehat{x_i},\ldots,x_s}) \rightarrow H_n(R_{x_1,\ldots,x_s}) \rightarrow \cdots$$
		where $\partial_n$ is multiplication by $(-1)^nx_i$.
	\end{Corollary}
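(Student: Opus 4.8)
The plan is to deduce this directly from Proposition \ref{exact homology sequence 1} by a suitable choice of the complex $\underline{C}$ together with an identification of $\underline{C}_{x_i}$ with $R_{x_1,\ldots,x_s}$.

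First I would fix $i\in\{1,\ldots,s\}$ and set
$$\underline{C} := R_{x_1,\ldots,\widehat{x_i},\ldots,x_s} = R_{x_1}\otimes\cdots\otimes R_{x_{i-1}}\otimes R_{x_{i+1}}\otimes\cdots\otimes R_{x_s},$$
the Koszul complex on the remaining elements (when $s=1$ this is simply $R$ regarded as a complex concentrated in degree $0$). This is a complex over $R$, so Proposition \ref{exact homology sequence 1} applies to the pair $(\underline{C},x_i)$ and produces the exact homology sequence
$$\cdots \rightarrow H_{n+1}(\underline{C}_{x_i}) \rightarrow H_n(\underline{C}) \xrightarrow{\partial_n} H_n(\underline{C}) \rightarrow H_n(\underline{C}_{x_i}) \rightarrow \cdots,$$
where $\partial_n$ is multiplication by $(-1)^n x_i$.

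Next I would identify $\underline{C}_{x_i}$. By definition $\underline{C}_{x_i}=\underline{C}\otimes R_{x_i}$, and since the tensor product of complexes over $R$ is commutative and associative (as recalled above), there is a canonical isomorphism of complexes
$$\underline{C}\otimes R_{x_i} \;\cong\; R_{x_1}\otimes\cdots\otimes R_{x_s} \;=\; R_{x_1,\ldots,x_s}.$$
Because this isomorphism is induced by permuting tensor factors, it is functorial; in particular it commutes with the endomorphisms induced by multiplication by $x_i$, and hence it yields isomorphisms $H_n(\underline{C}_{x_i})\cong H_n(R_{x_1,\ldots,x_s})$ for every $n$ that are compatible with the connecting maps in the sequence above. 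Substituting these identifications into the exact sequence gives exactly
$$\cdots \rightarrow H_{n+1}(R_{x_1,\ldots,x_s}) \rightarrow H_n(R_{x_1,\ldots,\widehat{x_i},\ldots,x_s}) \xrightarrow{\partial_n} H_n(R_{x_1,\ldots,\widehat{x_i},\ldots,x_s}) \rightarrow H_n(R_{x_1,\ldots,x_s}) \rightarrow \cdots$$
with $\partial_n$ multiplication by $(-1)^n x_i$, which is the assertion.

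There is no serious obstacle here: the statement is a purely formal consequence of Proposition \ref{exact homology sequence 1}. The only point that requires a little care is verifying that the factor-permutation isomorphism $\underline{C}\otimes R_{x_i}\cong R_{x_1,\ldots,x_s}$ is natural enough to transport both the homology modules and the connecting homomorphism; this follows from the functoriality of the tensor product of complexes, which is standard and was recorded in the preliminaries.
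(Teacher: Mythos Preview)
Your proof is correct and follows exactly the paper's approach: set $\underline{C}=R_{x_1,\ldots,\widehat{x_i},\ldots,x_s}$ and apply Proposition~\ref{exact homology sequence 1} with $x=x_i$. The paper states this in a single sentence, while you have additionally spelled out the identification $\underline{C}_{x_i}\cong R_{x_1,\ldots,x_s}$ via the commutativity and associativity of the tensor product of complexes, which is a welcome clarification.
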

	\begin{proof}
		Let $\underline{C} = R_{x_1,\ldots,\widehat{x_i},\ldots,x_s}$ and apply Proposition \ref{exact homology sequence 1} to $\underline{C}$ and $x_i$.
	\end{proof}
	We now recall the definition of filter regular sequences, along with some results related to their perturbations. From now on we assume $R$ is a local ring with maximal ideal $\mm$. The notion of filter regular sequence was introduced in \cite{CST}. An element $x \in R$ is called filter regular if $x \notin \pp$ for all $\pp \in \Ass(R) \setminus {\mm}$. It is easy to show that $x$ is filter regular if and only if $\ell\ell_R(0:x)$ is finite.
	
	A sequence of elements $x_1,\ldots,x_s$ is filter regular if for every $0 < i \leq s$ the image of $x_i$ in $R/(x_1,\ldots,x_{i-1})$ is a filter regular element. Note that if $x_1,\ldots,x_s$ is a filter regular sequence in $R$, then $H_n(R_{x_1,\ldots,x_s})$ has finite length for every $0 < n \leq s$ (see \cite[Lemma 1.17]{S}). It should also be cautioned that, unlike regular sequences, filter regular sequences do not generally permute. However, we have the following result:
	\begin{Lemma} \label{first filer regular element} \cite[Lemma 2.3]{MQS}.
		Let $x_1,\ldots,x_s$ be a filter regular sequence in $R$. Then the image of $x_1$ in $R/(x_2,\ldots,x_s)$ is a filter regular element.	
	\end{Lemma}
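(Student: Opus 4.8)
The plan is to read off the conclusion from the long exact Koszul homology sequence of Corollary \ref{ex seq of homo}, using the already-recorded finiteness of the first Koszul homology module. The first thing I would do is reformulate what must be proved: by the criterion recalled above ($x$ is filter regular iff $\ell\ell_R(0:x)$ is finite), the image of $x_1$ in $R/(x_2,\ldots,x_s)$ is a filter regular element if and only if the module
$$\big((x_2,\ldots,x_s):_R x_1\big)/(x_2,\ldots,x_s) \;=\; \big(0:_{R/(x_2,\ldots,x_s)} x_1\big)$$
has finite length (equivalently, finite Loewy length, since it is finitely generated). So it suffices to bound the length of this colon module.

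The main step is to apply Corollary \ref{ex seq of homo} with $i=1$, so that $R_{x_1,\ldots,\widehat{x_1},\ldots,x_s} = R_{x_2,\ldots,x_s}$, and to look at the tail of the resulting exact sequence in degrees $n=1,0$:
$$H_1(R_{x_1,\ldots,x_s}) \longrightarrow H_0(R_{x_2,\ldots,x_s}) \xrightarrow{\ \cdot x_1\ } H_0(R_{x_2,\ldots,x_s}).$$
Since $H_0(R_{x_2,\ldots,x_s}) = R/(x_2,\ldots,x_s)$, exactness at the middle term says that $0:_{R/(x_2,\ldots,x_s)} x_1$, the kernel of multiplication by $x_1$, is a homomorphic image of $H_1(R_{x_1,\ldots,x_s})$. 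But $x_1,\ldots,x_s$ is filter regular, so $H_1(R_{x_1,\ldots,x_s})$ has finite length (this is the fact recalled in Section 2, from \cite[Lemma 1.17]{S}). Hence $0:_{R/(x_2,\ldots,x_s)} x_1$ has finite length, which by the previous paragraph is exactly the assertion of the lemma.

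I do not expect a serious obstacle; the point worth flagging is only that one should not try to prove more than is stated. As cautioned just before the lemma, filter regular sequences need not permute, so one cannot hope that $x_2,\ldots,x_s,x_1$ is a filter regular \emph{sequence} — only that $x_1$ becomes a filter regular \emph{element} modulo $(x_2,\ldots,x_s)$, and the finiteness of $H_1(R_{x_1,\ldots,x_s})$ is precisely what makes even this much go through. If one prefers an argument that avoids the structure of the Koszul complex, one can instead localize at a prime $\pp\neq\mm$ with $x_1\in\pp$: if $\pp R_\pp$ were associated to $R_\pp/(x_2,\ldots,x_s)R_\pp$, that quotient would be nonzero, so $(x_1,\ldots,x_s)R_\pp$ would be a proper ideal; filter regularity then forces $x_1/1,\ldots,x_s/1$ to be a genuine regular sequence in the Noetherian local ring $R_\pp$, and since such sequences permute, $x_1/1$ would be a nonzerodivisor on $R_\pp/(x_2,\ldots,x_s)R_\pp$, so in fact $\pp\notin\Ass_R\big(R/(x_2,\ldots,x_s)\big)$.
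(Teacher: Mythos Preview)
Your argument is correct. The paper does not supply a proof for this lemma at all: it is simply quoted from \cite[Lemma 2.3]{MQS}, so there is no in-paper proof to compare against.

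Your primary argument via the Koszul long exact sequence is valid and, in fact, especially well suited to this paper: Corollary~\ref{ex seq of homo} and the finiteness of $H_1(R_{x_1,\ldots,x_s})$ (recorded just before the lemma, citing \cite[Lemma~1.17]{S}) are exactly the ingredients already on the table, and your deduction that $\big((x_2,\ldots,x_s):x_1\big)/(x_2,\ldots,x_s)$ is a quotient of $H_1(R_{x_1,\ldots,x_s})$ is immediate from exactness at $H_0(R_{x_2,\ldots,x_s})$. The alternative localization argument you sketch --- reducing to the fact that regular sequences in a Noetherian local ring permute --- is the more classical route and is closer in spirit to how such statements are usually proved in the literature; it has the advantage of not relying on the Koszul machinery, while your first argument has the advantage of staying entirely within the paper's toolkit. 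Both are complete as written.
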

		
	Our approach is inspired by the method developed in \cite{QT}. We begin with the case of filter regular sequences of length one, for which we have the following result.
	\begin{Proposition} \label{filter regular element}\cite[Prop 3.2]{QT}.
		Let $x$ be a filter regular element in $R$. Set
		$$c = \max\{\ell\ell_R(0:x), \ar_{\mm}(x)+1\}.$$
		Let $x' = x + \varepsilon$, where $\varepsilon$ is an arbitrary element in $\mm^c$. Then $x'$ is a filter regular element with 
		$(0:x')=(0:x)$.
	\end{Proposition}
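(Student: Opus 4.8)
The plan is to prove the equality of ideals $(0:x')=(0:x)$ first, and then read off that $x'$ is filter regular. Granting the equality, filter regularity of $x'$ is immediate: since $x$ is filter regular, $\ell\ell_R(0:x)$ is finite, hence so is $\ell\ell_R(0:x')=\ell\ell_R(0:x)$, and by the characterization recalled above an element is filter regular exactly when the Loewy length of its annihilator is finite. Thus the entire content of the statement is the identity $(0:x')=(0:x)$, which I would establish by two inclusions, proved in the order below.

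The inclusion $(0:x)\subseteq(0:x')$ is routine. If $u\in(0:x)$, then $\mm^c(0:x)=0$ because $c\ge\ell\ell_R(0:x)$, so $\varepsilon u=0$ since $\varepsilon\in\mm^c$; therefore $x'u=xu+\varepsilon u=0$. It is this inclusion that is then used inside the harder direction.

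The reverse inclusion $(0:x')\subseteq(0:x)$ is the heart of the argument, and I would prove it by an iteration driven by the Artin--Rees number $a:=\ar_{\mm}(x)$. Let $u\in(0:x')$. Then $xu=-\varepsilon u\in(x)\cap\mm^c$, and since $c\ge a$ the defining property of $\ar_{\mm}(x)$ gives $(x)\cap\mm^c\subseteq\mm^{c-a}(x)$; as $\mm^{c-a}(x)=x\mm^{c-a}$, we may write $xu=xu_1$ with $u_1\in\mm^{c-a}$. Then $u-u_1\in(0:x)\subseteq(0:x')$, and since $u\in(0:x')$ we get $u_1\in(0:x')$. Repeating the step with $u_1$ in place of $u$ --- now $xu_1=-\varepsilon u_1\in(x)\cap\mm^{2c-a}$ --- produces $u_2\in\mm^{2(c-a)}\cap(0:x')$ with $xu_1=xu_2$, and by induction one obtains $u_k\in\mm^{k(c-a)}\cap(0:x')$ with $xu=xu_1=\cdots=xu_k$ for every $k\ge0$. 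Consequently $xu\in\mm^{k(c-a)}$ for all $k$. Since the hypothesis $c\ge\ar_{\mm}(x)+1$ forces $c-a\ge1$, Krull's intersection theorem gives $xu\in\bigcap_k\mm^{k(c-a)}=0$, i.e.\ $u\in(0:x)$. Combining the two inclusions finishes the proof.

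I expect the reverse inclusion to be the only genuine obstacle. The delicate points are: verifying that the exponent of $\mm$ in each intersection $(x)\cap\mm^{(\cdot)}$ remains $\ge a$ so that Artin--Rees applies at every stage of the iteration; tracking that the successive corrections $u_k$ stay inside $(0:x')$, which is exactly where the first inclusion $(0:x)\subseteq(0:x')$ enters; and observing that it is the sharpened hypothesis $c\ge\ar_{\mm}(x)+1$ --- rather than just $c\ge\ar_{\mm}(x)$ --- that makes the ideals $\mm^{k(c-a)}$ decrease to zero, so that Krull's intersection theorem can be invoked to conclude.
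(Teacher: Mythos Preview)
Your argument is correct. The two inclusions are handled cleanly: the easy direction uses only $c\ge\ell\ell_R(0:x)$, and the iterated Artin--Rees argument for the reverse inclusion is sound, including the inductive check that $u_k\in\mm^{k(c-a)}\cap(0:x')$ and that the exponent $c+k(c-a)$ stays $\ge a$ so that the Artin--Rees bound applies at each stage; the strict inequality $c-a\ge 1$ is exactly what makes Krull's intersection theorem bite.

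As for comparison with the paper: this proposition is not proved in the present paper at all---it is quoted verbatim from \cite[Prop.~3.2]{QT} and used as a black box. So there is no ``paper's own proof'' to compare against here; your write-up supplies what the paper merely cites. The argument you give is in fact essentially the one in \cite{QT}.
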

	Building on the above result and \cite[Theorem 3.5]{QT}, we establish the following result for filter regular sequences of length two.
	\begin{Proposition}\label{filter regular 2 elements}
		Let $x_1,x_2$ be a filter regular sequence in $R$. Set
		$$a_1 = \ell\ell_R(0:x_1), \ a_2 = \ell\ell_R\left( \frac{(x_1):x_2}{(x_1)}\right)$$
		and $c= \max\{ a_1+2a_2, \ar_{\mm}(x_1), \ar_{\mm}(x_1,x_2)\} +1$.  Let $x_i' = x_i + \varepsilon_i$, where $\varepsilon_i$ is an arbitrary element in $\mm^c$, $i=1,2$. Then
		\begin{enumerate}
			\item [\rm (i)] $x_1', x_2'$ is a filter regular sequence in $R$ with
			$$\ell\ell_R\left( \frac{(x_1'):x_2'}{(x_1')}\right) \leq 2a_2.$$
			\item [\rm (ii)] $$\ell \left( \frac{(x_1'):x_2'}{(x_1')} \right) = \ell\left( \frac{(x_1):x_2}{(x_1)} \right).$$
		\end{enumerate}
	\end{Proposition}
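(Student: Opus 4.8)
Part (i) should follow directly from the two-variable case of \cite[Theorem 3.5]{QT}: applied to the filter regular sequence $x_1,x_2$ with the indicated bound---which is precisely the $c$ above---it gives that $x_1',x_2'$ is again filter regular and that $\ell\ell_R\bigl(((x_1'):x_2')/(x_1')\bigr)\le 2a_2$. So the substance is part (ii), the exact preservation of the length. The plan there is to perturb the two elements one at a time, to feed the length-one result Proposition~\ref{filter regular element} into suitable quotient rings, and to glue with the Koszul homology exact sequences of Corollary~\ref{ex seq of homo}.

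\emph{Step 1: isolate the perturbation of $x_2$.} Work in $\bar R:=R/(x_1')$ and let $\bar x_2,\bar x_2'$ denote the images of $x_2,x_2'$, so $\bar x_2'=\bar x_2+\overline{\varepsilon_2}$ with $\overline{\varepsilon_2}\in(\mm\bar R)^{c}$. Applying \cite[Theorem 3.5]{QT} to $x_1,x_2$ with the perturbation of $x_2$ taken to be $0$, the sequence $x_1',x_2$ is filter regular with $\ell\ell_R\bigl(((x_1'):x_2)/(x_1')\bigr)\le 2a_2<c$; one also checks that the Artin-Rees number of $x_2\bar R$ in $\bar R$ is at most $c-1$. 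Hence Proposition~\ref{filter regular element} applies in $\bar R$ to $\bar x_2$ and $\overline{\varepsilon_2}$ and yields $0:_{\bar R}\bar x_2'=0:_{\bar R}\bar x_2$, i.e. $((x_1'):x_2')/(x_1')=((x_1'):x_2)/(x_1')$. Thus (ii) is reduced to the statement that perturbing $x_1$ alone does not change the length of the colon module:
$$\ell\!\left(\frac{(x_1'):x_2}{(x_1')}\right)=\ell\!\left(\frac{(x_1):x_2}{(x_1)}\right).$$

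\emph{Step 2: perturb $x_1$ alone.} By Proposition~\ref{filter regular element} (applicable because $a_1<c$ and $\ar_{\mm}(x_1)+1\le c$) one has $0:x_1'=0:x_1=:K$. Corollary~\ref{ex seq of homo} for $R_{x_1,x_2}$ with $i=2$ gives a short exact sequence $0\to K/x_2K\to H_1(R_{x_1,x_2})\to ((x_1):x_2)/(x_1)\to 0$, and its $x_1'$-analogue has the same first term $K/x_2K$ (since $0:x_1'=K$ and $x_2$ is unchanged); hence the displayed equality is equivalent to $\ell(H_1(R_{x_1',x_2}))=\ell(H_1(R_{x_1,x_2}))$. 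Now use Corollary~\ref{ex seq of homo} with $i=1$: a short exact sequence $0\to (0:x_2)/x_1(0:x_2)\to H_1(R_{x_1,x_2})\to ((x_2):x_1)/(x_2)\to 0$, together with its $x_1'$-analogue. On the right, $x_1$ is a filter regular element of $R/(x_2)$ by Lemma~\ref{first filer regular element}, so Proposition~\ref{filter regular element} applied in the ring $R/(x_2)$ gives $((x_2):x_1')/(x_2)=((x_2):x_1)/(x_2)$; here one uses $\ell\ell_R(((x_2):x_1)/(x_2))\le\ell\ell_R(H_1(R_{x_1,x_2}))\le a_1+a_2<c$, which follows from the two exact sequences, and the bound $c-1$ on the relevant Artin-Rees number. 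On the left, $N:=0:x_2$ is finitely generated, $N/x_1N$ is annihilated by $\mm^{a_1+a_2}$, and $x_1$ is filter regular on $N$; from this I would show directly that $x_1N=x_1'N$: writing $x_1n=x_1'n-\varepsilon_1n$, using $\varepsilon_1N\subseteq\mm^{c}N\subseteq x_1N$, and running an Artin-Rees iteration inside $N$ to place $x_1n$ in $x_1'N+\mm^{j}N$ for every $j$, one concludes $x_1n\in x_1'N$ since $x_1'N$ is $\mm$-adically closed and $\bigcap_j\mm^{j}N=0$. Therefore the left-hand terms agree as well, so $\ell(H_1(R_{x_1',x_2}))=\ell(H_1(R_{x_1,x_2}))$, which proves (ii).

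I expect the two genuinely delicate points to be in Step 2. One is the lemma that $x_1N=x_1'N$ for a finitely generated $N$ on which $x_1$ is filter regular with $N/x_1N$ of bounded Loewy length: elementary, but the Artin-Rees iteration and the limiting argument need care. The other is the bookkeeping certifying that every Loewy length and every Artin-Rees number arising when Proposition~\ref{filter regular element} is invoked in $R/(x_1')$ or in $R/(x_2)$ is at most $c-1$---this is exactly what forces the quantities $a_1+2a_2$, $\ar_{\mm}(x_1)$ and $\ar_{\mm}(x_1,x_2)$ into the definition of $c$. Given these, Step 1 and the extraction of the two short exact sequences are routine.
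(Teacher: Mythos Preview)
Your proof is correct. Part (i) and your Step 1 coincide with the paper's argument: both reduce (ii) to the equality
\[
\ell\!\left(\frac{(x_1'):x_2}{(x_1')}\right)=\ell\!\left(\frac{(x_1):x_2}{(x_1)}\right)
\]
by passing to $\bar R=R/(x_1')$ and applying Proposition~\ref{filter regular element} to $x_2$, using \cite[Theorem~3.5(iv)]{QT} and \cite[Lemma~2.6]{QT} to control the Artin--Rees number of $x_2\bar R$. Your Step~2, however, differs genuinely from the paper. The paper handles the perturbation of $x_1$ alone by decomposing the colon module directly via
\[
0\longrightarrow \frac{(x_1)+(0:x_2)}{(x_1)}\longrightarrow \frac{(x_1):x_2}{(x_1)}\longrightarrow \frac{(x_1):x_2}{(x_1)+(0:x_2)}\longrightarrow 0
\]
and its $x_1'$-analogue, and then invokes \cite[Prop.~3.4]{QT} to identify both outer terms with objects independent of whether $x_1$ or $x_1'$ is used. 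You instead route the comparison through $H_1(R_{x_1,x_2})$: the two Koszul exact sequences (one per variable) reduce the question to showing $((x_2):x_1)/(x_2)=((x_2):x_1')/(x_2)$ and $x_1N=x_1'N$ for $N=0:x_2$. The first you get from Proposition~\ref{filter regular element} applied in $R/(x_2)$ (with Lemma~\ref{first filer regular element} and the bound $\ell\ell_R(H_1)\le a_1+a_2$); the second from a Nakayama/iteration argument using $\mm^{a_1+a_2}N\subseteq x_1N$ and $c>a_1+a_2$. The paper's route is shorter and yields actual isomorphisms of the pieces, at the cost of importing \cite[Prop.~3.4]{QT} as a black box; your route is longer but more self-contained, needing only Proposition~\ref{filter regular element}, Lemma~\ref{first filer regular element}, Corollary~\ref{ex seq of homo}, and elementary Nakayama. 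Both are valid; the paper's decomposition is perhaps more conceptual, while yours stays closer to the Koszul-homology framework used elsewhere in the paper.
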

	\begin{proof}
		(i) See \cite[Theorem 3.5 (i)]{QT}.
		
		(ii) Consider the following short exact sequences
		$$ 0 \longrightarrow \frac{(x_1) + (0:x_2)}{(x_1)} \longrightarrow \frac{(x_1):x_2}{(x_1)} \longrightarrow \frac{(x_1):x_2}{(x_1) + (0:x_2)} \longrightarrow 0,$$
		$$ 0 \longrightarrow \frac{(x_1') + (0:x_2)}{(x_1')} \longrightarrow \frac{(x_1'):x_2}{(x_1')} \longrightarrow \frac{(x_1'):x_2}{(x_1') + (0:x_2)} \longrightarrow 0.$$
		Following the proof of \cite[Prop 3.4]{QT} we have
		$$\frac{(x_1') + (0:x_2)}{(x_1')} \cong \frac{(0:x_2)}{x_1(0:x_1x_2)} \cong \frac{(x_1) + (0:x_2)}{(x_1)} \quad \text{and} \quad \frac{(x_1'):x_2}{(x_1') + (0:x_2)} \cong \frac{(x_1):x_2}{(x_1) + (0:x_2)}.$$  
		Therefore, it follows from the above two short exact sequences that
		$$\ell \left( \frac{(x_1'):x_2}{(x_1')} \right) = \ell\left( \frac{(x_1):x_2}{(x_1)} \right).$$
		Set $\bar{R}=R/(x_1')$. It follows from the first assertion that $x_2$ is a filter regular element in $\bar{R}$ with $\ell\ell_{\bar{R}}(0 :_{\bar{R}} x_2) \leq 2a_2$. Furthermore, by \cite[Lemma 2.6]{QT} and \cite[Theorem 3.5 (iv)]{QT}, we have
		$$\ar_{\mm\bar{R}}(x_2\bar{R}) \leq \ar_{\mm}(x_1',x_2) = \ar_{\mm}(x_1,x_2).$$
		Now we can apply Proposition \ref{filter regular element} to the elements $x_2,x_2'$ in the ring $\bar{R}$ and get
		$$\frac{(x_1'):x_2}{(x_1')} = \frac{(x_1'):x_2'}{(x_1')}.$$
		Combining this fact with the above equality of lengths, we obtain the desired equality.
	\end{proof}

	\section{Koszul homology modules under small perturbations}
	In this section, we study the behavior of Koszul homology modules under small perturbations. We begin by examining the case of filter regular sequences of length one.	
	\begin{Proposition}
		Let $x$ be a filter regular element in $R$. Set
		$$c = \max\{a_{\mm}(0:x), \ar_{\mm}(x)+1\}.$$
		Let $x' = x + \varepsilon$, where $\varepsilon$ is an arbitrary element in $\mm^c$. Then 
		$$H_1(R_{x'}) = H_1(R_x).$$
	\end{Proposition}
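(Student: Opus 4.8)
The plan is to reduce this statement directly to Proposition \ref{filter regular element}, since for a single element the Koszul homology in degree one is nothing but an annihilator. First I would recall that the Koszul complex of one element is $R_x : 0 \to R \xrightarrow{x} R \to 0$, so that $H_1(R_x) = \ke(R \xrightarrow{x} R) = (0:_R x)$, and likewise $H_1(R_{x'}) = (0:_R x')$. (This is also the special case $s = 1$ of the identity $H_s(M_{x_1,\ldots,x_s}) = (0:_M I)$ recorded in Section 2.)

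Next I would observe that the constant $c$ defined in the statement coincides with the one appearing in Proposition \ref{filter regular element} (reading the misprinted $a_{\mm}(0:x)$ as $\ell\ell_R(0:x)$). Hence Proposition \ref{filter regular element} applies verbatim to $x$ and to $x' = x + \varepsilon$ with $\varepsilon \in \mm^c$, and it yields that $x'$ is again a filter regular element with $(0:_R x') = (0:_R x)$.

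Combining the two observations gives
$$H_1(R_{x'}) = (0:_R x') = (0:_R x) = H_1(R_x),$$
which is the claim. There is essentially no obstacle here: the entire content of the statement is already contained in Proposition \ref{filter regular element}, and the only thing to verify is the elementary bookkeeping that $H_1$ of the one-element Koszul complex equals the annihilator ideal. This length-one case will then serve as the base of the induction on $s$ in the proof of the Main Theorem, where the genuine difficulty — controlling the alternating sum $\sum (-1)^i\ell(H_i)$ by chasing the long exact sequences of Corollary \ref{ex seq of homo} together with the length estimates of Proposition \ref{filter regular 2 elements} — will first appear.
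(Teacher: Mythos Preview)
Your proof is correct and is essentially identical to the paper's own argument: the paper also just notes that $H_1(R_x) = (0:x)$ and $H_1(R_{x'}) = (0:x')$, and then invokes Proposition~\ref{filter regular element}. Your reading of the misprint $a_{\mm}(0:x)$ as $\ell\ell_R(0:x)$ is also correct.
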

	\begin{proof}
		Since $H_1(R_{x'}) = (0:x')$ and $H_1(R_{x}) = (0:x)$, the assertion follows from Proposition \ref{filter regular element}. 
	\end{proof}	
	For the last Koszul  homology module, we have the following result.
	\begin{Theorem} \label{last homology}
		Let $x_1,\ldots,x_s$ be a filter regular sequence in $R$. Set
		$$c = \max\{\ell\ell_R(0:x_1), \ar_{\mm}(x_1)+1\}.$$
		Let $x_i' = x_i + \varepsilon_i$, where $\varepsilon_i$ is an arbitrary element in $\mm^c$, $i=1,\ldots,s$. Then
		$$H_s(R_{x'_1,\ldots,x'_s}) = H_s(R_{x_1,\ldots,x_s}).$$
	\end{Theorem}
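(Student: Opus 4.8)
The plan is to reduce the statement to an equality of annihilator ideals. By the description of Koszul homology recalled in Section~2, one has $H_s(R_{x_1,\dots,x_s}) = (0:_R I)$ with $I=(x_1,\dots,x_s)$, and similarly $H_s(R_{x_1',\dots,x_s'}) = (0:_R I')$ with $I'=(x_1',\dots,x_s')$. Since $x_1,\dots,x_s$ generate $I$ we have $(0:_R I) = \bigcap_{i=1}^s (0:_R x_i)$, and likewise $(0:_R I') = \bigcap_{i=1}^s (0:_R x_i')$; so it suffices to prove $(0:_R I)=(0:_R I')$.

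First I would place every relevant element inside $(0:_R x_1)$. The inclusion $(0:_R I)\subseteq (0:_R x_1)$ is immediate. For the primed side, $(0:_R I')\subseteq (0:_R x_1')$, and I would apply Proposition~\ref{filter regular element} to $x_1$ and $x_1'=x_1+\varepsilon_1$: its hypothesis holds because $c\ge \max\{\ell\ell_R(0:x_1),\ar_{\mm}(x_1)+1\}$ and $\varepsilon_1\in\mm^c$, which gives $(0:_R x_1')=(0:_R x_1)$. Thus both $(0:_R I)$ and $(0:_R I')$ sit inside $(0:_R x_1)$.

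The key point is that $c\ge \ell\ell_R(0:_R x_1)$ forces $\mm^c\cdot(0:_R x_1)=0$. Hence every $a\in(0:_R x_1)$ satisfies $\varepsilon_i a=0$ for all $i$, because $\varepsilon_i\in\mm^c$; consequently, for such $a$, $x_i a=0$ if and only if $x_i' a=(x_i+\varepsilon_i)a=0$. I would then run this equivalence in both directions: for $a\in(0:_R I)\subseteq(0:_R x_1)$ it shows $x_i'a=0$ for all $i$, so $a\in(0:_R I')$; for $a\in(0:_R I')\subseteq(0:_R x_1)$ it shows $x_i a=0$ for all $i$, so $a\in(0:_R I)$. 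This yields $(0:_R I)=(0:_R I')$, and hence $H_s(R_{x_1',\dots,x_s'})=H_s(R_{x_1,\dots,x_s})$.

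The argument is essentially bookkeeping, so I do not expect a real obstacle; the only step needing genuine input is the inclusion $(0:_R I')\subseteq(0:_R x_1)$, which is not visible from the perturbed data alone and relies on Proposition~\ref{filter regular element} to identify $(0:x_1')$ with $(0:x_1)$ before the perturbation terms can be absorbed. It is worth noting, in contrast with the general Main Theorems, that here only the first element $x_1$ is involved, so no control over the higher Loewy lengths $a_2,\dots,a_s$ or over $\ar_{\mm}(x_1,\dots,x_j)$ for $j\ge 2$ is required.
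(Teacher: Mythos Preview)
Your proposal is correct and essentially identical to the paper's proof: both identify $H_s$ with $(0:I)$, place both annihilators inside $(0:x_1)$ via Proposition~\ref{filter regular element}, and then kill the perturbations using $\mm^c\cdot(0:x_1)=0$. The only cosmetic difference is that you read off $\varepsilon_i a=0$ directly from the Loewy-length bound $c\ge\ell\ell_R(0:x_1)$, whereas the paper obtains the same vanishing by invoking Proposition~\ref{filter regular element} once for each $\varepsilon_i$ (to get $(0:x_1)=(0:x_1+\varepsilon_i)$); the content is the same.
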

	\begin{proof}
		Let $I=(x_1,\ldots,x_s)$ and $I'=(x'_1,\ldots,x'_s)$.  It is well known that 
		$$H_s(R_{x_1,\ldots,x_s}) = (0 : I) \ \text{and} \ H_s(R_{x'_1,\ldots,x'_s}) = (0 : I').$$
		
		For every $x \in (0:I)$ we have $x\in(0:x_1)$. By Proposition \ref{filter regular element}, we have
		$$(0:x_1) = (0:x_1+\varepsilon_i),$$
		for every $i=1,\ldots,s$. It follows that $x\varepsilon_i =0$. Hence, $xx'_i = xx_i + x\varepsilon_i = 0$ for every $i=1,\ldots,s$ and $x\in(0:I')$. 
		
		Conversely, for every $x \in (0:I')$ we have $x\in(0:x'_1)=(0:x_1)$. It follows that $x\varepsilon_i =0$ for every $i=1,\ldots,s$. Hence, $xx_i = xx'_i - x\varepsilon_i = 0$ and $x\in(0:I)$. Thus, $(0:I') = (0:I)$ and the proof is complete.
	\end{proof}
	For the filter regular sequences of length two, we have the following result.
	\begin{Proposition}
		Let $x_1,x_2$ be a filter regular sequence in $R$. Let 
		$$a_1 = \ell\ell_R(0:x_1), \ a_2 = \ell\ell_R\left( \frac{(x_1):x_2}{(x_1)}\right).$$
		Set $N= \max\{ a_1+2a_2, \ar_{\mm}(x_1), \ar_{\mm}(x_1,x_2)\} +1$.  Let $x_i' = x_i + \varepsilon_i$, where $\varepsilon_i$ is an arbitrary element in $\mm^N$, $i=1,2$. Then
		\begin{enumerate}
			\item [\rm (i)] $H_2(R_{x_1,x_2}) = H_2(R_{x'_1,x'_2})$,
			\vskip 0.2cm
			\item [\rm (ii)] $\ell(H_1(R_{x_1,x_2})) = \ell(H_1(R_{x'_1,x'_2}))$.
		\end{enumerate}
	\end{Proposition}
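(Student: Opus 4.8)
The plan is to derive both parts from the material of Section~2 together with the long exact Koszul sequence of Corollary~\ref{ex seq of homo}. The first thing I would check is that the bound $N$ chosen here dominates every threshold invoked below. Since $N=\max\{a_1+2a_2,\ar_{\mm}(x_1),\ar_{\mm}(x_1,x_2)\}+1$, we have $N>a_1=\ell\ell_R(0:x_1)$ and $N\ge\ar_{\mm}(x_1)+1$, so $\mm^N$ lies inside the ideal demanded by Theorem~\ref{last homology} (used with $s=2$) as well as by Proposition~\ref{filter regular element} (applied to $x_1$); and the hypothesis on $N$ in Proposition~\ref{filter regular 2 elements} is verbatim the present hypothesis. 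Granting this, part~(i) is immediate: $H_2(R_{x_1,x_2})=(0:_R(x_1,x_2))$ and $H_2(R_{x'_1,x'_2})=(0:_R(x'_1,x'_2))$, and these two modules coincide by Theorem~\ref{last homology}.

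For part~(ii) I would first produce a short exact sequence expressing $H_1$. Applying Corollary~\ref{ex seq of homo} to $x_1,x_2$ with $i=2$, the truncated complex is $R_{x_1}$, for which $H_1(R_{x_1})=(0:_R x_1)$, $H_0(R_{x_1})=R/(x_1)$ and $H_n(R_{x_1})=0$ for $n\neq 0,1$; the part of the long exact sequence around degree~$1$ then reads
$$H_2(R_{x_1,x_2})\to(0:x_1)\xrightarrow{-x_2}(0:x_1)\to H_1(R_{x_1,x_2})\to R/(x_1)\xrightarrow{x_2}R/(x_1),$$
which collapses to
$$0\longrightarrow\frac{(0:x_1)}{x_2(0:x_1)}\longrightarrow H_1(R_{x_1,x_2})\longrightarrow\frac{(x_1):x_2}{(x_1)}\longrightarrow0,$$
and likewise with $x'_1,x'_2$ in place of $x_1,x_2$. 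It therefore suffices to show that the two outer terms have the same length before and after perturbation.

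The right-hand term is handled at once by Proposition~\ref{filter regular 2 elements}(ii), which gives $\ell\left(\frac{(x'_1):x'_2}{(x'_1)}\right)=\ell\left(\frac{(x_1):x_2}{(x_1)}\right)$. For the left-hand term, Proposition~\ref{filter regular element} applied to $x_1$ gives $(0:x'_1)=(0:x_1)$; writing $K$ for this module we have $\mm^{a_1}K=0$, and since $\varepsilon_2\in\mm^N$ with $N>a_1$ we get $\varepsilon_2K\subseteq\mm^NK=0$, hence $x'_2k=(x_2+\varepsilon_2)k=x_2k$ for every $k\in K$. Thus $x'_2K=x_2K$, so the two left-hand modules are literally equal; adding lengths across the two short exact sequences yields $\ell(H_1(R_{x_1,x_2}))=\ell(H_1(R_{x'_1,x'_2}))$.

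I do not expect a genuine obstacle: the substantive facts — that $x'_1,x'_2$ remains filter regular, that $(0:x_1)$ is preserved, and that $\ell\left(\frac{(x_1):x_2}{(x_1)}\right)$ is preserved — are precisely what Section~2 supplies, so the only new input is the elementary collapse of the Koszul long exact sequence into the displayed short exact sequence, together with the triviality $\varepsilon_2K=0$. The one point deserving a moment's care is the sign on the connecting map $-x_2$, which does not change its image, so the cokernel is indeed $K/x_2K$.
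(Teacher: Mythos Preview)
Your proof is correct and follows essentially the same route as the paper: both use Theorem~\ref{last homology} for (i) and the Koszul long exact sequence of Corollary~\ref{ex seq of homo} together with Proposition~\ref{filter regular 2 elements}(ii) for (ii). The only minor difference is that the paper takes the alternating sum of lengths along the full exact sequence to get $\ell(H_1)-\ell(H_2)=\ell\big(((x_1):x_2)/(x_1)\big)$ and then invokes part~(i), whereas you collapse to the short exact sequence $0\to K/x_2K\to H_1\to ((x_1):x_2)/(x_1)\to 0$ and verify directly that each outer term is preserved (using $\varepsilon_2K=0$ for the left one); this avoids appealing to part~(i) in the proof of~(ii) but is otherwise the same computation.
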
	
	\begin{proof}
		
		(i) The assertion is proved by Theorem \ref{last homology}.
		
		(ii) By Corollary \ref{ex seq of homo}, we have the following exact sequence
		$$0 \rightarrow H_2(R_{x_1,x_2}) \rightarrow H_1(R_{x_1}) \xrightarrow{-x_2} H_1(R_{x_1}) \rightarrow H_1(R_{x_1,x_2}) \rightarrow H_0(R_{x_1}) \xrightarrow{x_2} H_0(R_{x_1}).$$
		This yields an exact sequence
		$$0 \rightarrow H_2(R_{x_1,x_2}) \rightarrow H_1(R_{x_1}) \xrightarrow{-x_2} H_1(R_{x_1}) \rightarrow H_1(R_{x_1,x_2}) \rightarrow \Ker(H_0(R_{x_1}) \xrightarrow{x_2} H_0(R_{x_1})) \rightarrow 0.$$
		Since $$\Ker(H_0(R_{x_1}) \xrightarrow{x_2} H_0(R_{x_1})) = \frac{(x_1):x_2}{(x_1)},$$
		it follows from the above exact sequence that
		$$\ell(H_1(R_{x_1,x_2})) - \ell(H_2(R_{x_1,x_2}))=  \ell\left(\frac{(x_1):x_2}{(x_1)}\right).$$
		Similarly, we also have
		$$\ell(H_1(R_{x_1',x_2'})) - \ell(H_2(R_{x_1',x_2'})) =  \ell\left(\frac{(x_1'):x_2'}{(x_1')}\right).$$
		Hence, from the first assertion and Propopsition \ref{filter regular 2 elements} (ii), we obtain the desired equality.
	\end{proof}
	In order to extend to the filter regular sequences of higher length we need the following result.
	\begin{Proposition}\label{filter reular sequence}
		Let $x_1,\ldots,x_s$ be a filter regular sequence in $R$. For $i=1,\ldots,s$, let
		$$a_i = \ell\ell_R\left( \frac{(x_1,\ldots,x_{i-1}):x_i}{(x_1,\ldots,x_{i-1})}\right).$$
		Set $N=\max\{ a_1 + 2a_2 + \cdots + 2^{s-1}a_s, \ar_{\mm}(x_1),\ldots,\ar_{\mm}(x_1,\ldots,x_s)\} + 1$. Let $x_i' = x_i + \varepsilon_i$, where $\varepsilon_i$ is an arbitrary element in $\mm^N$, $i=1,\ldots,s$. Then
		\begin{enumerate}
			\item [\rm (i)] $x_1',\ldots, x_s'$ is a filter regular sequence in $R$ with
			$$\ell\ell_R\left( \frac{(x_1',\ldots,x_{s-1}'):x_s'}{(x_1',\ldots,x_{s-1}')}\right) \leq 2^{s-1}a_s.$$
			\item [\rm (ii)] $$\ell \left( \frac{(x_1',\ldots,x_{s-1}'):x_s'}{(x_1',\ldots,x_{s-1}')} \right) = \ell\left( \frac{(x_1,\ldots,x_{s-1}):x_s}{(x_1,\ldots,x_{s-1})} \right).$$
		\end{enumerate}
	\end{Proposition}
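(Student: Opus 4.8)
The plan is to obtain part (i) directly from \cite[Theorem 3.5 (i)]{QT} and to prove part (ii) by running the argument of Proposition \ref{filter regular 2 elements}(ii) with the principal ideal $(x_1)$ replaced by $J := (x_1,\ldots,x_{s-1})$ and $(x_1')$ by $J' := (x_1',\ldots,x_{s-1}')$. Part (i) requires nothing new: once $N$ dominates $\ar_\mm(x_1,\ldots,x_i)$ for all $i$ and the weighted sum $a_1 + 2a_2 + \cdots + 2^{s-1}a_s$, the machinery of \cite{QT} already guarantees that $x_1',\ldots,x_s'$ is filter regular and that the Loewy length of the last colon quotient is at most $2^{s-1}a_s$.

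For part (ii), I would set $K := (0 : x_s)$, so that $J+K \subseteq J:x_s$ and $J'+K \subseteq J':x_s$, and compare the two short exact sequences
\[
0 \to \tfrac{J+K}{J} \to \tfrac{J:x_s}{J} \to \tfrac{J:x_s}{J+K} \to 0, \qquad 0 \to \tfrac{J'+K}{J'} \to \tfrac{J':x_s}{J'} \to \tfrac{J':x_s}{J'+K} \to 0 .
\]
Since $\varepsilon_1,\ldots,\varepsilon_{s-1}\in\mm^N$ with $N$ above $\ar_\mm(x_1,\ldots,x_i)$ and above the Loewy lengths $a_1,\ldots,a_{s-1}$, and since $x_1',\ldots,x_{s-1}'$ is filter regular (apply part (i) with $\varepsilon_s=0$), I expect that, following the computation in the proof of \cite[Prop 3.4]{QT}, one obtains isomorphisms $\tfrac{J'+K}{J'}\cong \tfrac{J+K}{J}$ and $\tfrac{J':x_s}{J'+K}\cong \tfrac{J:x_s}{J+K}$; together with the two short exact sequences these force $\ell\big(\tfrac{J':x_s}{J'}\big)=\ell\big(\tfrac{J:x_s}{J}\big)$.

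It then remains to replace $x_s$ by $x_s'$ in the numerator. I would pass to $\bar R := R/J'$: applying part (i) once more with $\varepsilon_s=0$ shows that $x_s$ is a filter regular element of $\bar R$ with $\ell\ell_{\bar R}(0:_{\bar R}x_s)=\ell\ell_R\big(\tfrac{J':x_s}{J'}\big)\le 2^{s-1}a_s$, while \cite[Lemma 2.6]{QT} and \cite[Theorem 3.5 (iv)]{QT} give $\ar_{\mm\bar R}(x_s\bar R)\le \ar_\mm(x_1',\ldots,x_{s-1}',x_s)=\ar_\mm(x_1,\ldots,x_s)$. Hence $c:=\max\{\ell\ell_{\bar R}(0:_{\bar R}x_s),\ \ar_{\mm\bar R}(x_s\bar R)+1\}\le \max\{2^{s-1}a_s,\ \ar_\mm(x_1,\ldots,x_s)+1\}\le N$, so the image of $\varepsilon_s$ in $\bar R$ lies in $(\mm\bar R)^N\subseteq(\mm\bar R)^c$, and Proposition \ref{filter regular element} applied to $x_s$ and $x_s'=x_s+\varepsilon_s$ over $\bar R$ gives $(0:_{\bar R}x_s)=(0:_{\bar R}x_s')$, i.e. $\tfrac{J':x_s}{J'}=\tfrac{J':x_s'}{J'}$. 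Combined with the length equality above, this is the desired conclusion.

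The step I expect to be the real obstacle is the isomorphism $\tfrac{J'+K}{J'}\cong\tfrac{J+K}{J}$ together with its companion $\tfrac{J':x_s}{J'+K}\cong\tfrac{J:x_s}{J+K}$: in the length-one case this is exactly \cite[Prop 3.4]{QT}, but for a length-$(s-1)$ perturbation one must check that the same computation still goes through — either by extracting it from \cite[Prop 3.4]{QT} in sufficient generality, or by an induction on the length of the prefix $x_1,\ldots,x_{s-1}$ — and it is precisely here that the Artin--Rees terms $\ar_\mm(x_1,\ldots,x_i)$ and the weighted Loewy sum in the definition of $N$ get consumed.
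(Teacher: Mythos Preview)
Your treatment of part (i) and of the final step in part (ii) --- passing to $\bar R=R/J'$ and invoking Proposition \ref{filter regular element} to replace $x_s$ by $x_s'$ --- is correct and matches what the paper does in the $s=2$ case. The issue is the step you yourself flag: the isomorphisms $\tfrac{J'+K}{J'}\cong\tfrac{J+K}{J}$ and $\tfrac{J':x_s}{J'+K}\cong\tfrac{J:x_s}{J+K}$. The computation in \cite[Prop.~3.4]{QT} is specific to a \emph{principal} ideal $(x_1)$: it identifies $\tfrac{(x_1')+(0:x_2)}{(x_1')}$ with $\tfrac{(0:x_2)}{x_1(0:x_1x_2)}$, an expression that has no evident analogue when $(x_1)$ is replaced by a multi-generated ideal $J$. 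So ``following the computation'' does not go through directly, and your fallback of ``induction on the length of the prefix'' is not the same induction as the statement itself and would need its own setup and bounds.

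The paper avoids this difficulty by organising the induction differently. Rather than perturbing all of $x_1,\ldots,x_{s-1}$ at once and then $x_s$, it perturbs one element at a time: first only $x_1$, by applying the $s=2$ case (Proposition \ref{filter regular 2 elements}) to the pair $x_1,x_s$ inside $R_1=R/(x_2,\ldots,x_{s-1})$, which yields
\[
\ell\!\left(\frac{(x_1',x_2,\ldots,x_{s-1}):x_s}{(x_1',x_2,\ldots,x_{s-1})}\right)=\ell\!\left(\frac{(x_1,\ldots,x_{s-1}):x_s}{(x_1,\ldots,x_{s-1})}\right);
\]
then it passes to $R'=R/(x_1')$, checks that $x_2,\ldots,x_s$ is a filter regular sequence there with the required Loewy and Artin--Rees bounds, and applies the inductive hypothesis (length $s-1$) to perturb $x_2,\ldots,x_s$. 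Chaining the two equalities gives the result. In this scheme every use of \cite[Prop.~3.4]{QT} is in the principal-ideal setting where it actually applies, and the inductive hypothesis is invoked exactly as stated. Your outline can be salvaged by adopting this one-variable-at-a-time induction in place of the block isomorphisms.
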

	\begin{proof}
		(i) is proved in \cite[Theorem 3.5 (i)]{QT}.
		
		(ii) We will proceed by induction on $s$. The cases $s=1,2$ are proved by Proposition \ref{filter regular element} and Proposition \ref{filter regular 2 elements}. Suppose now that $s >2$.
		
		By Lemma \ref{first filer regular element}, we have $x_1,x_s$ is a filter regular sequence in $R_1 = R/(x_2,\ldots,x_{s-1})$. Following the proof of \cite[Theorem 3.5]{QT} we have
		$$\ell\ell_{R_1}(0:_{R_1} x_1) \leq \ell\ell_R\left(\frac{(x_2,\ldots,x_{s-1}):x_1}{(x_2,\ldots,x_{s-1})}\right) \leq a_1 + \cdots + a_{s-1}.$$
		By \cite[Lemma 2.6]{QT}, we also have the following
		\begin{align*}
		& \ar_{\mm}(x_1R_1) \leq \ar_{\mm}(x_1,\ldots,x_{s-1}),\\ 
		& \ar_{\mm}(x_1,x_s)R_1 \leq \ar_{\mm}(x_1,\ldots,x_s).
		\end{align*}
		Now we can apply Proposition \ref{filter regular 2 elements} to the filter regular sequence  $x_1,x_s$ in the ring $R_1$ and get
		$$\ell \left( \frac{x_1'R_1:x_s}{x_1'R_1} \right) = \ell\left( \frac{x_1R_1:x_s}{x_1R_1} \right).$$
		This is equivalent to 
		\begin{equation}
			\ell \left( \frac{(x_1',x_2,\ldots,x_{s-1}):x_s}{(x_1',x_2,\ldots,x_{s-1})} \right) = \ell\left( \frac{(x_1,x_2,\ldots,x_{s-1}):x_s}{(x_1,x_2,\ldots,x_{s-1})} \right).
		\end{equation}	
		
		On the other hand, by \cite[Theorem 3.5 (i)]{QT}, $x_1',x_2,\ldots,x_s$ is a filter regular sequence in $R$. It follows that $x_2,\ldots,x_s$ is  a filter regular sequence in $R'=R/(x_1')$.
		
		Following the proof of \cite[Theorem 3.5]{QT} we have
		$$\bar{a}_i= \ell\ell_{R'}\left(\frac{(x_2,\ldots,x_{i-1})R':x_i}{(x_2,\ldots,x_{i-1})R'}\right) \leq \ell\ell_R\left(\frac{(x_1',x_2,\ldots,x_{i-1}):x_i}{(x_1',x_2,\ldots,x_{i-1})}\right)\leq 2a_i,$$
		for $i=2,\ldots,s$. It follows that
		$$\bar{a}_2+2\bar{a}_3+\cdots+2^{s-2}\bar{a}_s \leq 2a_2+2^2a_3+\cdots+2^{s-1}a_s.$$
		By \cite[Lemma 2.6]{QT} and \cite[Theorem 3.5(iv)]{QT}, we also have
		$$\ar_{\mm}(x_2,\ldots,x_i)R' \leq \ar_{\mm}(x_1',x_2,\ldots,x_i) = \ar_{\mm}(x_1,x_2,\ldots,x_i),$$
		for $i=2,\ldots,s$. By induction, one has
		$$\ell \left( \frac{(x_2',\ldots,x_{s-1}')R':x_s'}{(x_2',\ldots,x_{s-1}')R'} \right) = \ell\left( \frac{(x_2,\ldots,x_{s-1})R':x_s}{(x_2,\ldots,x_{s-1})R'} \right).$$
		This is equivalent to
		\begin{equation}
			\ell \left( \frac{(x_1',x_2',\ldots,x_{s-1}'):x_s'}{(x_1',x_2',\ldots,x_{s-1}')} \right) = \ell\left( \frac{(x_1',x_2,\ldots,x_{s-1}):x_s}{(x_1',x_2,\ldots,x_{s-1})} \right).
		\end{equation}
		Combining the equations (1) and (2), we obtain the desired equality.
	\end{proof}
	We now proceed to state the main results of this paper.
	\begin{Theorem}\label{main Theorem}
		Let $x_1,\ldots,x_s$ be a filter regular sequence in $R$. For $i=1,\ldots,s$, let
		$$a_i = \ell\ell_R\left( \frac{(x_1,\ldots,x_{i-1}):x_i}{(x_1,\ldots,x_{i-1})}\right).$$
		Set $N=\max\{ a_1 + 2a_2 + \cdots + 2^{s-1}a_s, \ar_{\mm}(x_1),\ldots,\ar_{\mm}(x_1,\ldots,x_s)\} + 1$. Let $x_i' = x_i + \varepsilon_i$, where $\varepsilon_i$ is an arbitrary element in $\mm^N$, $i=1,\ldots,s$. Then
		$$\sum_{i=1}^s (-1)^i\ell(H_i(R_{x_1',\ldots,x_s'})) = \sum_{i=1}^s (-1)^i\ell(H_i(R_{x_1,\ldots,x_s})).$$
	\end{Theorem}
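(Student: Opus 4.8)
The plan is to show that the alternating sum is, for any filter regular sequence, equal to a single length which is visibly governed by Proposition \ref{filter reular sequence}. Precisely, I would first prove, as a separate lemma, that for \emph{every} filter regular sequence $y_1,\ldots,y_s$ in $R$ one has
$$\sum_{i=1}^s (-1)^i \ell(H_i(R_{y_1,\ldots,y_s})) = -\,\ell\!\left(\frac{(y_1,\ldots,y_{s-1}):y_s}{(y_1,\ldots,y_{s-1})}\right).$$
Granting this, Theorem \ref{main Theorem} follows at once: by Proposition \ref{filter reular sequence}(i) the perturbed sequence $x_1',\ldots,x_s'$ is again filter regular (this is the only place the hypothesis $\varepsilon_i\in\mm^N$ is used), so the identity applies both to $x_1,\ldots,x_s$ and to $x_1',\ldots,x_s'$; and by Proposition \ref{filter reular sequence}(ii) the two resulting right-hand sides are equal. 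Comparing the two instances of the identity yields the asserted equality of alternating sums.

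To prove the identity I would apply Corollary \ref{ex seq of homo} with $i=s$, i.e. with $\underline{C}=R_{y_1,\ldots,y_{s-1}}$, obtaining the long exact sequence
$$\cdots \to H_{n+1}(R_{y_1,\ldots,y_s}) \to H_n(R_{y_1,\ldots,y_{s-1}}) \xrightarrow{(-1)^n y_s} H_n(R_{y_1,\ldots,y_{s-1}}) \to H_n(R_{y_1,\ldots,y_s}) \to \cdots.$$
Writing $h_n := H_n(R_{y_1,\ldots,y_{s-1}})$, this splits into short exact sequences
$$0 \to h_n/y_s h_n \to H_n(R_{y_1,\ldots,y_s}) \to (0:_{h_{n-1}} y_s) \to 0, \qquad n \geq 1.$$
Since $y_1,\ldots,y_{s-1}$ is filter regular, $h_n$ has finite length for $1\le n\le s-1$ and is zero for $n\ge s$; and although $h_0=R/(y_1,\ldots,y_{s-1})$ is not of finite length, its submodule $(0:_{h_0}y_s)=((y_1,\ldots,y_{s-1}):y_s)/(y_1,\ldots,y_{s-1})$ has finite length by the filter regularity of $y_1,\ldots,y_s$. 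So every module in the displayed short exact sequences has finite length, and for $n\ge 1$ additivity of length on $0\to(0:_{h_n}y_s)\to h_n\xrightarrow{y_s}h_n\to h_n/y_sh_n\to 0$ gives $\ell(h_n/y_sh_n)=\ell(0:_{h_n}y_s)$. Putting $k_n:=\ell(0:_{h_n}y_s)$, the short exact sequences give $\ell(H_n(R_{y_1,\ldots,y_s}))=k_n+k_{n-1}$ for all $n\ge 1$, while $k_n=0$ for $n\ge s$. A telescoping computation of $\sum_{n=1}^s(-1)^n(k_n+k_{n-1})$ then collapses everything except $-k_0$, and $k_0$ is precisely the length on the right-hand side of the identity. (The base case $s=1$ is trivial: both sides equal $-\ell(0:y_1)$.)

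The one genuinely delicate point, and what I expect to be the main obstacle in writing the details carefully, is the bottom of the Koszul complex: $h_0$ is not of finite length, so the clean kernel/cokernel length identity is unavailable there, and the bookkeeping in the telescoping sum must be arranged so that exactly the single term $k_0$ survives. Everything else is formal: the splitting of the long exact sequence of Corollary \ref{ex seq of homo}, the finite-length statements coming from the filter regular hypothesis (already recorded in the Preliminaries), and the two invocations of Proposition \ref{filter reular sequence} for the perturbed sequence. I would state the identity as an independent lemma, since it also makes transparent why the \emph{full} alternating sum is controlled by the single invariant $\ell((x_1,\ldots,x_{s-1}):x_s/(x_1,\ldots,x_{s-1}))$, which is exactly what lets it be preserved even prior to knowing (via Eisenbud's theorem, in Main Theorem 2) that each individual length $\ell(H_i)$ is preserved.
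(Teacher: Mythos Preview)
Your proposal is correct and follows essentially the same approach as the paper: both reduce the alternating sum, via the long exact sequence of Corollary~\ref{ex seq of homo} with $i=s$, to the single quantity $-\ell\bigl((y_1,\ldots,y_{s-1}):y_s/(y_1,\ldots,y_{s-1})\bigr)$, and then invoke Proposition~\ref{filter reular sequence}(ii). The only cosmetic difference is that the paper reads off the identity directly from the alternating-sum-equals-zero property of the truncated long exact sequence (ending at $H_1\to\Ker(H_0\xrightarrow{x_s}H_0)\to 0$), whereas you split into short exact sequences and telescope; your explicit mention of Proposition~\ref{filter reular sequence}(i) to justify that $x_1',\ldots,x_s'$ is filter regular is a point the paper leaves implicit.
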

	\begin{proof}
		By Corollary \ref{ex seq of homo}, we have the following exact sequence
		$$0 \rightarrow H_s(R_{x_1,\ldots,x_s}) \rightarrow H_{s-1}(R_{x_1,\ldots,x_{s-1}}) \xrightarrow{(-1)^{s-1}x_s} H_{s-1}(R_{x_1,\ldots,x_{s-1}}) \rightarrow H_{s-1}(R_{x_1,\ldots,x_s}) \rightarrow \cdots$$ 
		$$\cdots \rightarrow H_1(R_{x_1,\ldots,x_s}) \rightarrow \Ker(H_0(R_{x_1,\ldots,x_{s-1}}) \xrightarrow{x_s} H_0(R_{x_1,\ldots,x_{s-1}})) \rightarrow 0.$$
		Since
		$$\Ker(H_0(R_{x_1,\ldots,x_{s-1}}) \xrightarrow{x_s} H_0(R_{x_1,\ldots,x_{s-1}})) = \frac{(x_1,\ldots,x_{s-1}):x_s}{(x_1,\ldots,x_{s-1})},$$
		it follows from the exact sequence that
		$$\sum_{i=1}^s (-1)^i\ell(H_i(R_{x_1,\ldots,x_s})) = -\ell\left(\frac{(x_1,\ldots,x_{s-1}):x_s}{(x_1,\ldots,x_{s-1})} \right).$$
		Similarly, we also have
		$$\sum_{i=1}^s (-1)^i\ell(H_i(R_{x_1',\ldots,x_s'})) = -\ell\left(\frac{(x_1',\ldots,x_{s-1}'):x_s'}{(x_1',\ldots,x_{s-1}')} \right).$$
		Hence, from Proposition \ref{filter reular sequence} (ii), we obtain the desired equality.
	\end{proof}
	As a natural next step, we are led to consider the following question.
	
	\begin{Question} \label{Q1} Let $x_1,\ldots,x_s$ be a filter regular sequence in $R$. Does there exist a number $N$ such that for arbitrary elements $\varepsilon_1,\ldots,\varepsilon_s$ in $\mm^N$ we have
		$$\ell(H_i(R_{x_1',\ldots,x_s'})) = \ell(H_i(R_{x_1,\ldots,x_s})),$$
		for all $i  \geq 1$, where $x_i'=x_i+\varepsilon_i, i=1,\ldots,s$? 
	\end{Question}
	
	We now recall the main result from \cite{E}, which is particularly useful for addressing Question~\ref{Q1}. Let
	$$\underline{C} = \cdots \rightarrow C_1 \xrightarrow{f_1} C_0 \xrightarrow{f_0} C_{-1} \xrightarrow{f_{-1}} \cdots$$
	be a complex of finitely generated $R$-modules. An $\mm$-adic approximation of $\underline{C}$ of order $d = (\ldots,d_1,d_0,d_{-1},\ldots)$ is a complex
	$$\underline{C}_{\varepsilon} = \cdots \rightarrow C_1 \xrightarrow{f_1+\varepsilon_1} C_0 \xrightarrow{f_0+\varepsilon_0} C_{-1} \xrightarrow{f_{-1}+\varepsilon_{-1}} \cdots$$
	where, for each $i$, the map
	$$\varepsilon_i : C_i \rightarrow \mm^{d_i}C_{i-1}.$$
	Suppose the complex
	$$\underline{C} = \cdots \rightarrow C_{n+1} \xrightarrow{f_{n+1}} C_n \xrightarrow{f_n} C_{n-1} \xrightarrow{f_{n-1}} \cdots$$
	is filtered by subcomplexes
	$$\underline{C} = \underline{C}_0 \supseteq \underline{C}_1 \supseteq \cdots$$
	where $\underline{C}_p = \mm^p\underline{C}$. This filtration induces a corresponding filtration on the homology $H(\underline{C})$. We denote by $H_n(\underline{C})_p$ the $p$th submodule in the induced filtration. By \cite[Section 2A.5]{S1}, we have $\bigcap_{p \geq 0} H_n(\underline{C})_p = 0$, and for sufficiently large $p$, $H_n(\underline{C})_{p+1} = \mm H_n(\underline{C})_p$. This allows us to define the associated graded module $\gr H_n(\underline{C})$.
	
	\begin{Theorem}\label{maintheoremofEisenbud}\cite[Main Theorem]{E} With the above notation, there exists a sequence of positive integers $d = (\ldots,d_1,d_0,d_{-1},\ldots)$ such that if $\underline{C}_{\varepsilon}$ is an $\mm$-adic approximation of $\underline{C}$ of order $d$, then
	\begin{enumerate}
		\item [\rm (i)] $\gr H_n(\underline{C}_{\varepsilon})$ is a subquotient of $\gr H_n(\underline{C})$ for all $n$, and
		\vskip 0.2cm
		\item [\rm (ii)] if $H_n(\underline{C})$ and $H_{n-1}(\underline{C})$ are both annihilated by some power of $\mm$, then $$\gr H_n(\underline{C}) \cong \gr H_n(\underline{C}_{\varepsilon}).$$
	\end{enumerate}	
	
	\end{Theorem}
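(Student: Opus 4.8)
The plan is to pass to the associated graded complex over $\gr R=\bigoplus_{p\ge0}\mm^p/\mm^{p+1}$ and to compare $\underline{C}$ and $\underline{C}_\varepsilon$ through their common leading-term data. The decisive observation is that, since each $\varepsilon_i$ maps $\mm^pC_i$ into $\mm^{p+d_i}C_{i-1}$ with $d_i\ge1$, the perturbation $\varepsilon$ strictly raises the $\mm$-adic filtration degree; hence $\varepsilon$ induces the zero map on $\gr$, and $\gr\underline{C}_\varepsilon=\gr\underline{C}$ as complexes over $\gr R$. Writing $Z_n=\ke f_n$, $B_n=\im f_{n+1}$ (and $Z_n^\varepsilon,B_n^\varepsilon$ for the perturbed differentials), I would first record the initial-form map $\alpha\colon\gr H_n(\underline{C})\to H_n(\gr\underline{C})$ sending the class of a cycle $z\in Z_n\cap\mm^pC_n$ to its leading term $\bar z\in\gr_pC_n$; a short check shows $\alpha$ is injective, since if $\bar z$ is a leading term of a boundary then $z$ already lies in $(Z_n\cap\mm^{p+1}C_n)+(B_n\cap\mm^pC_n)$. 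The same recipe gives an injection $\alpha_\varepsilon\colon\gr H_n(\underline{C}_\varepsilon)\hookrightarrow H_n(\gr\underline{C}_\varepsilon)=H_n(\gr\underline{C})$ with the \emph{same} target. Thus both graded modules are realized as submodules $\im\alpha,\im\alpha_\varepsilon$ of one fixed module, and the whole theorem becomes a comparison of these two submodules.

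For part (i) I would choose the $d_i$ above the relevant Artin--Rees numbers: let $c$ be large enough that $\im f_n\cap\mm^pC_{n-1}=f_n(\mm^{p-c}C_n)$ for all $p\ge c$ and all relevant $n$, and require $d_n\ge c+1$. Given a cycle $z^\varepsilon\in Z_n^\varepsilon\cap\mm^pC_n$ of the perturbed complex, one has $f_n(z^\varepsilon)=-\varepsilon_n(z^\varepsilon)\in\im f_n\cap\mm^{p+d_n}C_{n-1}$, so Artin--Rees yields $w\in\mm^{p+1}C_n$ with $f_n(w)=f_n(z^\varepsilon)$; then $z^\varepsilon-w$ is a genuine cycle of $\underline{C}$ with the same leading term as $z^\varepsilon$. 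Hence $\im\alpha_\varepsilon\subseteq\im\alpha$, which exhibits $\gr H_n(\underline{C}_\varepsilon)$ as a submodule, a fortiori a subquotient, of $\gr H_n(\underline{C})$ for every $n$ and with no finiteness hypothesis.

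For part (ii) I must prove the reverse inclusion $\im\alpha\subseteq\im\alpha_\varepsilon$, and this is where the finite-length hypotheses enter. Starting from an honest cycle $z\in Z_n\cap\mm^pC_n$, I would correct it to a cycle of $\underline{C}_\varepsilon$ with the same leading term by successive approximation. At the first step the defect $\delta_0=(f_n+\varepsilon_n)(z)=\varepsilon_n(z)$ lies in $\mm^{p+d_n}C_{n-1}$ and, because $\underline{C}_\varepsilon$ is a complex, satisfies $f_{n-1}(\delta_0)=-\varepsilon_{n-1}(\varepsilon_n(z))\in\mm^{p+d_n+d_{n-1}}C_{n-2}$; using Artin--Rees for $\im f_{n-1}$ I subtract a high-order term to turn $\delta_0$ into an exact cycle of $\underline{C}$ sitting in filtration level $\ge P$, where $P$ bounds the (finite) filtration of $H_{n-1}(\underline{C})$. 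Finite length of $H_{n-1}(\underline{C})$ forces this class to vanish, so the corrected defect is a genuine boundary of $\underline{C}$, which Artin--Rees realizes as $f_n$ of a high-order element; subtracting it strictly raises the filtration degree of the defect while keeping the correction inside $\mm^{p+1}C_n$, so the leading term is never disturbed. Iterating pushes the defect into arbitrarily high filtration. The process is made to terminate in finitely many steps by invoking Artin--Rees for the perturbed boundary module $\im(f_n+\varepsilon_n)$: once the defect lies deep enough it is $(f_n+\varepsilon_n)$ of an element of $\mm^{p+1}C_n$, and subtracting that produces an exact cycle $z^\varepsilon$ of $\underline{C}_\varepsilon$ with $\overline{z^\varepsilon}=\bar z$. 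This gives $\im\alpha\subseteq\im\alpha_\varepsilon$; combined with part (i) we obtain $\im\alpha=\im\alpha_\varepsilon$, i.e. $\gr H_n(\underline{C})\cong\gr H_n(\underline{C}_\varepsilon)$. The hypothesis that $H_n(\underline{C})$ also has finite length guarantees that these common submodules are themselves of finite length, so the comparison is between honest finite-length graded modules.

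I expect the main obstacle to be precisely the termination of the correction process in part (ii). Because $R$ need not be complete, the naive infinite sum of corrections has no meaning, and the Artin--Rees constant of the perturbed complex cannot be bounded a priori in terms of the unperturbed data; the point to get right is that, although this constant is not uniform in $\varepsilon$, it is nonetheless finite, so the approximation stops after finitely many steps for each fixed $\varepsilon$. Keeping the bookkeeping of filtration degrees consistent --- so that every correction genuinely lands in $\mm^{p+1}C_n$ while each defect gains at least one unit of filtration, and so that a single choice of the sequence $d$, taken above the Artin--Rees constants for the $\im f_n$ and above the filtration lengths $P$ of the relevant homology, serves simultaneously for (i), for (ii), and for all $n$ --- is the delicate part of the argument.
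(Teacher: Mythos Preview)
The paper does not prove this theorem. It is quoted verbatim as the Main Theorem of Eisenbud's 1974 paper \cite{E} and used as a black box to deduce Lemma~\ref{H2}; no argument for it appears anywhere in the present paper. Consequently there is no ``paper's own proof'' to compare your proposal against.

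That said, your sketch is broadly in the spirit of Eisenbud's original argument: the identification $\gr\underline{C}_\varepsilon=\gr\underline{C}$, the passage to initial forms, and the use of Artin--Rees to lift approximate cycles to genuine ones are exactly the ingredients Eisenbud uses (he phrases it via the spectral sequence of the $\mm$-adic filtration, whose $E^1$-page is $H(\gr\underline{C})$ and whose $E^\infty$-page is $\gr H(\underline{C})$; your map $\alpha$ is the edge of that picture). One point to watch: your argument for (i) would give that $\gr H_n(\underline{C}_\varepsilon)$ is a \emph{submodule} of $\gr H_n(\underline{C})$, which is stronger than the ``subquotient'' stated; the step ``$z^\varepsilon-w$ has the same leading term, hence $\alpha_\varepsilon([z^\varepsilon])\in\im\alpha$'' tacitly assumes that the class $[z^\varepsilon-w]$ sits in filtration level exactly $p$ in $H_n(\underline{C})$, and when it does not you must chase further to see the image still lands in $\im\alpha$. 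In Eisenbud's spectral-sequence formulation this subtlety is absorbed into the statement that $E^\infty$ is a subquotient of every $E^r$. Your discussion of the termination issue in (ii) correctly identifies the delicate point and handles it the way Eisenbud does: the Artin--Rees constant for the perturbed complex is finite (even if not uniform in $\varepsilon$), so the correction procedure halts.
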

	\noindent In fact, \cite{E} treats the case of an arbitrary ring $R$, and the approximation is formulated $J$-adically, where $J$ is an ideal contained in the Jacobson radical of $R$.
	
	Let $R_{x_1,\ldots,x_s}$ be the Koszul complex of a filter regular sequence $x_1,\ldots,x_s$. Given $N > 0$, let $x_i' = x_i + \varepsilon_i$, where $\varepsilon_i$ is an arbitrary element in $\mm^N$, $i=1,\ldots,s$. In the sense of \cite{E}, the complex $R_{x_1',\ldots,x_s'}$ is an approximation of order $(\ldots,N,N,N,\ldots)$ of $R_{x_1,\ldots,x_s}$. From Theorem \ref{maintheoremofEisenbud}, we derive the following:
	\begin{Lemma}\label{H2} Let $x_1,\ldots,x_s$ be a filter regular sequence in $R$. Then there exists a number $N$ such that for arbitrary elements $\varepsilon_1,\ldots,\varepsilon_s$ in $\mm^N$ we have
		$$\ell(H_i(R_{x_1',\ldots,x_s'})) = \ell(H_i(R_{x_1,\ldots,x_s})),$$
		for all  $i \geq 2$, where $x_i'=x_i+\varepsilon_i, i=1,\ldots,s.$
	\end{Lemma}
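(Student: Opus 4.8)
The plan is to deduce the lemma directly from Eisenbud's Theorem~\ref{maintheoremofEisenbud}, applied to the bounded complex of finitely generated free modules $\underline{C}=R_{x_1,\ldots,x_s}$. First I would let $d=(\ldots,d_1,d_0,d_{-1},\ldots)$ be the sequence of positive integers that theorem produces for $\underline{C}$. Since the Koszul complex is concentrated in homological degrees $0,\ldots,s$, only the entries $d_1,\ldots,d_s$ are relevant, so I would set
$$N:=\max\{d_1,\ldots,d_s\}$$
(enlarging $N$ past the bound of Proposition~\ref{filter reular sequence} if one also wants $x_1',\ldots,x_s'$ itself to remain filter regular, though this is not strictly needed below). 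As observed in the paragraph just before the statement, for any $\varepsilon_1,\ldots,\varepsilon_s\in\mm^N$ the complex $R_{x_1',\ldots,x_s'}$ is an $\mm$-adic approximation of $R_{x_1,\ldots,x_s}$ of order $(\ldots,N,N,N,\ldots)$; since $N\ge d_i$ for every relevant $i$, it is also an $\mm$-adic approximation of order $d$, and hence Theorem~\ref{maintheoremofEisenbud} applies with $\underline{C}_\varepsilon=R_{x_1',\ldots,x_s'}$.

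The key observation I would then invoke is that for every $i\ge 2$ both $H_i(R_{x_1,\ldots,x_s})$ and $H_{i-1}(R_{x_1,\ldots,x_s})$ have finite length: since $x_1,\ldots,x_s$ is a filter regular sequence, $H_n(R_{x_1,\ldots,x_s})$ has finite length for all $0<n\le s$ by \cite[Lemma 1.17]{S}, and vanishes for $n<0$ and $n>s$, while for $i\ge 2$ the two indices $i$ and $i-1$ both lie in the range $\ge 1$. Consequently both modules are annihilated by a power of $\mm$, the hypothesis of Theorem~\ref{maintheoremofEisenbud}(ii) is met, and one obtains
$$\gr H_i(R_{x_1,\ldots,x_s})\cong\gr H_i(R_{x_1',\ldots,x_s'})\qquad(i\ge 2).$$
This is precisely where the restriction $i\ge 2$ is forced: the case $i=1$ would additionally require $H_0(R_{x_1,\ldots,x_s})=R/I$ to have finite length, which in general it does not.

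It remains to turn this isomorphism of associated graded modules into the stated equality of lengths. Recall that by \cite[Section 2A.5]{S1} the induced filtration $H_n(\underline{C})=H_n(\underline{C})_0\supseteq H_n(\underline{C})_1\supseteq\cdots$ is separated and satisfies $H_n(\underline{C})_{p+1}=\mm H_n(\underline{C})_p$ for $p\gg 0$, and likewise for $\underline{C}_\varepsilon$. For such a filtration $M=M_0\supseteq M_1\supseteq\cdots$ on a finitely generated module $M$: if $\gr M$ has finite length, then only finitely many graded pieces are nonzero, so $M_p=\mm M_p$ and hence $M_p=0$ for $p\gg 0$ by Nakayama, and therefore $\ell(M)=\sum_{p\ge 0}\ell(M_p/M_{p+1})=\ell(\gr M)$. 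Applying this to $H_i(R_{x_1,\ldots,x_s})$, which has finite length by the previous step, and to $H_i(R_{x_1',\ldots,x_s'})$, whose associated graded is by Theorem~\ref{maintheoremofEisenbud}(i) a subquotient of the finite-length module $\gr H_i(R_{x_1,\ldots,x_s})$ and so is itself of finite length, I would conclude
\begin{align*}
\ell(H_i(R_{x_1',\ldots,x_s'}))&=\ell(\gr H_i(R_{x_1',\ldots,x_s'}))=\ell(\gr H_i(R_{x_1,\ldots,x_s}))\\
&=\ell(H_i(R_{x_1,\ldots,x_s}))
\end{align*}
for all $i\ge 2$, as required. The hard part is not any single computation but making Eisenbud's machine apply cleanly: recognizing that the Koszul complex is bounded, so a single integer $N$ dominating the finitely many relevant entries of $d$ works for all the differentials at once, and that its positive-degree homology has finite length, so that part (ii) of Theorem~\ref{maintheoremofEisenbud} fires for every $i\ge 2$. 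Once these two facts are in hand, the remainder is routine bookkeeping with the $\mm$-adic filtration.
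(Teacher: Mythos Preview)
Your proof is correct and follows exactly the same approach as the paper: apply Eisenbud's Theorem~\ref{maintheoremofEisenbud} to the Koszul complex, using that $H_i$ and $H_{i-1}$ have finite length for $i\ge 2$ so that part~(ii) fires. The paper's proof is simply a terse two-line version of what you wrote; your additional care in choosing $N$, justifying the finite-length hypothesis, and passing from $\gr H_i\cong\gr H_i'$ to $\ell(H_i)=\ell(H_i')$ fills in details the paper leaves implicit.
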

	\begin{proof}
		Since $H_i(R_{x_1,\ldots,x_s})$ and $H_{i-1}(R_{x_1,\ldots,x_s})$ have finite length for all $i \geq 2$, it follows from Theorem \ref{maintheoremofEisenbud} that 
		$$\ell(H_i(R_{x_1',\ldots,x_s'})) = \ell(H_i(R_{x_1,\ldots,x_s})),$$
		for all  $i \geq 2$.
	\end{proof}
	
	Combining Theorem \ref{main Theorem} with Lemma \ref{H2}, we obtain an  affirmative answer to Question \ref{Q1}.
	
	\begin{Theorem}\label{main Theorem 2} Let $x_1,\ldots,x_s$ be a filter regular sequence in $R$. Then there exists a number $N$ such that for arbitrary elements $\varepsilon_1,\ldots,\varepsilon_s$ in $\mm^N$ we have
		$$\ell(H_i(R_{x_1',\ldots,x_s'})) = \ell(H_i(R_{x_1,\ldots,x_s})),$$
		for all $i\geq 1$, where $x_i'=x_i+\varepsilon_i, i=1,\ldots,s.$
	\end{Theorem}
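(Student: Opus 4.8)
The plan is to combine the two ingredients already established: Lemma~\ref{H2}, which comes from Eisenbud's theorem and handles all Koszul homology in degrees $i \geq 2$, and Theorem~\ref{main Theorem}, which pins down the remaining degree through the alternating sum of lengths.

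First I would invoke Lemma~\ref{H2} to obtain an integer $N_1$ such that for all $\varepsilon_1,\ldots,\varepsilon_s \in \mm^{N_1}$ one has $\ell(H_i(R_{x_1',\ldots,x_s'})) = \ell(H_i(R_{x_1,\ldots,x_s}))$ for every $i \geq 2$. Next I would apply Theorem~\ref{main Theorem} to obtain the (explicit) integer $N_2 = \max\{a_1 + 2a_2 + \cdots + 2^{s-1}a_s, \ar_{\mm}(x_1),\ldots,\ar_{\mm}(x_1,\ldots,x_s)\} + 1$ so that for all $\varepsilon_1,\ldots,\varepsilon_s \in \mm^{N_2}$ the alternating sums $\sum_{i=1}^s (-1)^i \ell(H_i(R_{x_1',\ldots,x_s'}))$ and $\sum_{i=1}^s (-1)^i \ell(H_i(R_{x_1,\ldots,x_s}))$ coincide. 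Setting $N = \max\{N_1, N_2\}$ makes both conclusions hold simultaneously for any $\varepsilon_1,\ldots,\varepsilon_s \in \mm^N$; note that both defining quantities depend only on the original sequence $x_1,\ldots,x_s$, so $N$ is a genuine constant.

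The final step is a one-line cancellation. Writing
$$\sum_{i=1}^s (-1)^i \ell(H_i(R_{x_1,\ldots,x_s})) = -\ell(H_1(R_{x_1,\ldots,x_s})) + \sum_{i=2}^s (-1)^i \ell(H_i(R_{x_1,\ldots,x_s})),$$
and likewise for the perturbed sequence, the equality of the two alternating sums together with the term-by-term equality in all degrees $\geq 2$ forces $\ell(H_1(R_{x_1',\ldots,x_s'})) = \ell(H_1(R_{x_1,\ldots,x_s}))$. Combined with Lemma~\ref{H2} this yields the claimed equality for all $i \geq 1$ (the cases $i > s$ and $i = 0$ in the statement being vacuous or $H_0 = R/I$ not being asserted).

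There is no genuine obstacle here, since the real content is carried by Theorem~\ref{main Theorem} and Lemma~\ref{H2}. The only point I would take care to make explicit is \emph{why} degree $1$ is not already covered by Eisenbud's theorem directly: part (ii) of Theorem~\ref{maintheoremofEisenbud} requires both $H_n(\underline{C})$ and $H_{n-1}(\underline{C})$ to be annihilated by a power of $\mm$, and for $n = 1$ this would demand that $H_0(R_{x_1,\ldots,x_s}) = R/I$ have finite length, which fails unless $I$ is $\mm$-primary. This is precisely the gap filled by the alternating-sum computation of Theorem~\ref{main Theorem}, whose proof only needs the finite length of the $H_i$ in positive degrees.
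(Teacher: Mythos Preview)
Your proposal is correct and follows exactly the same route as the paper, which simply states that the result is obtained by combining Theorem~\ref{main Theorem} with Lemma~\ref{H2}. You have merely spelled out the cancellation argument and the choice $N=\max\{N_1,N_2\}$ that the paper leaves implicit.
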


	Let $(R,\mm)$ be a local ring. If $c$ is the Loewy length of $R$-module $M$ then $\varepsilon M = 0$ for every $\varepsilon \in \mm^c$. Therefore, in the study of small perturbations, it would be nice if we can give an upper bound for the Loewy length. We close this section by giving an upper bound for Loewy length of Koszul homology modules.
	
	For the first Koszul homology modules we have the following result.
	\begin{Proposition}\label{first loewy length}
		Let $x_1,\ldots,x_s$ be a filter regular sequence in $R$. For $i=1,\ldots,s$, let
		$$a_i = \ell\ell_R\left( \frac{(x_1,\ldots,x_{i-1}):x_i}{(x_1,\ldots,x_{i-1})}\right).$$
		Set $N=\max\{ a_1 + 2a_2 + \cdots + 2^{s-1}a_s, \ar_{\mm}(x_1),\ldots,\ar_{\mm}(x_1,\ldots,x_s)\} + 1$. Let $x_i' = x_i + \varepsilon_i$, where $\varepsilon_i$ is an arbitrary element in $\mm^N$, $i=1,\ldots,s$. Then
		$$\ell\ell_R(H_1(R_{x_1',\ldots,x_s'})) \leq a_1 + 2a_2 + \cdots + 2^{s-1}a_s.$$ 
	\end{Proposition}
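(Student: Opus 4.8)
The plan is to reduce the statement to a perturbation-free estimate and then feed in Proposition~\ref{filter reular sequence}. Concretely, I would first establish, for an \emph{arbitrary} filter regular sequence $y_1,\ldots,y_s$ in $R$ (filter regularity guaranteeing that all the modules below have finite, hence finite Loewy, length), the bound
$$\ell\ell_R(H_1(R_{y_1,\ldots,y_s})) \leq \sum_{j=1}^{s} \ell\ell_R\left( \frac{(y_1,\ldots,y_{j-1}):y_j}{(y_1,\ldots,y_{j-1})} \right),$$
and then apply it to the perturbed sequence $y_j=x_j'$, controlling each term on the right by $2^{j-1}a_j$.

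For the auxiliary estimate I would induct on $s$. For $s=1$, $H_1(R_{y_1})=(0:y_1)$ and the inequality is an equality. For $s>1$, apply Corollary~\ref{ex seq of homo} with $i=s$ (removing the last element) and isolate the piece of the long exact sequence in homological degrees $1$ and $0$:
$$H_1(R_{y_1,\ldots,y_{s-1}}) \xrightarrow{-y_s} H_1(R_{y_1,\ldots,y_{s-1}}) \to H_1(R_{y_1,\ldots,y_s}) \to H_0(R_{y_1,\ldots,y_{s-1}}) \xrightarrow{y_s} H_0(R_{y_1,\ldots,y_{s-1}}).$$
Since $H_0(R_{y_1,\ldots,y_{s-1}})=R/(y_1,\ldots,y_{s-1})$, the kernel of the last map is $((y_1,\ldots,y_{s-1}):y_s)/(y_1,\ldots,y_{s-1})$, while the kernel of $H_1(R_{y_1,\ldots,y_s}) \to H_0(R_{y_1,\ldots,y_{s-1}})$ is the image of $H_1(R_{y_1,\ldots,y_{s-1}})$, which is $H_1(R_{y_1,\ldots,y_{s-1}})/y_s H_1(R_{y_1,\ldots,y_{s-1}})$. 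Hence there is a short exact sequence
$$0 \to \frac{H_1(R_{y_1,\ldots,y_{s-1}})}{y_s H_1(R_{y_1,\ldots,y_{s-1}})} \to H_1(R_{y_1,\ldots,y_s}) \to \frac{(y_1,\ldots,y_{s-1}):y_s}{(y_1,\ldots,y_{s-1})} \to 0.$$
Two elementary facts about Loewy length finish this part: $\ell\ell_R(M/yM)\leq \ell\ell_R(M)$ for any $y$, and $\ell\ell_R(B)\leq \ell\ell_R(A)+\ell\ell_R(C)$ for any short exact sequence $0\to A\to B\to C\to 0$ (because $\mm^{\ell\ell_R(C)}B\subseteq A$). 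Together with the induction hypothesis applied to $y_1,\ldots,y_{s-1}$ these give the claimed inequality.

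Now set $y_j=x_j'$. For each $j=1,\ldots,s$ the prefix $x_1,\ldots,x_j$ is again a filter regular sequence, and its associated threshold $N_j=\max\{a_1+2a_2+\cdots+2^{j-1}a_j,\ \ar_{\mm}(x_1),\ldots,\ar_{\mm}(x_1,\ldots,x_j)\}+1$ satisfies $N_j\leq N$, since both the truncated sum and the finite set of Artin--Rees numbers only grow with the length. Thus $\varepsilon_1,\ldots,\varepsilon_j\in\mm^N\subseteq\mm^{N_j}$, and Proposition~\ref{filter reular sequence}(i), applied to the sequence $x_1,\ldots,x_j$, yields
$$\ell\ell_R\left( \frac{(x_1',\ldots,x_{j-1}'):x_j'}{(x_1',\ldots,x_{j-1}')} \right) \leq 2^{j-1}a_j.$$
Summing the auxiliary estimate over $j=1,\ldots,s$ then gives $\ell\ell_R(H_1(R_{x_1',\ldots,x_s'}))\leq \sum_{j=1}^{s}2^{j-1}a_j=a_1+2a_2+\cdots+2^{s-1}a_s$, as desired.

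I do not expect a serious obstacle here. The only points requiring care are extracting the short exact sequence cleanly from the long exact sequence of Corollary~\ref{ex seq of homo} (in particular identifying the kernel of $H_1(R_{y_1,\ldots,y_s})\to H_0$ with a quotient of $H_1$ of the shorter Koszul complex), and verifying the threshold inequality $N_j\leq N$ so that Proposition~\ref{filter reular sequence}(i) is legitimately applicable to every prefix. It is also worth noting in passing that it is part~(i) of that proposition—filter regularity of $x_1',\ldots,x_s'$—which ensures the Koszul homology and colon modules in sight all have finite length, so that Loewy length is the appropriate bookkeeping device.
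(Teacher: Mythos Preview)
Your proof is correct and follows essentially the same approach as the paper: both arguments induct on $s$, extract from Corollary~\ref{ex seq of homo} the short exact sequence
\[
0 \to H_1(R_{y_1,\ldots,y_{s-1}})/y_s H_1(R_{y_1,\ldots,y_{s-1}}) \to H_1(R_{y_1,\ldots,y_s}) \to \frac{(y_1,\ldots,y_{s-1}):y_s}{(y_1,\ldots,y_{s-1})} \to 0,
\]
use subadditivity of Loewy length, and invoke Proposition~\ref{filter reular sequence}(i) for the bound $2^{j-1}a_j$ on the colon quotients. The only cosmetic difference is that you first isolate the perturbation-free inequality $\ell\ell_R(H_1(R_{y_1,\ldots,y_s}))\leq \sum_j \ell\ell_R(((y_1,\ldots,y_{j-1}):y_j)/(y_1,\ldots,y_{j-1}))$ and then specialize to $y_j=x_j'$, whereas the paper inducts directly on the perturbed sequence; your explicit verification that $N_j\leq N$ makes transparent a step the paper leaves implicit.
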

	\begin{proof}
		We will proceed by induction on $s$. For $s=1$, one has $\ell\ell_R(H_1(R_{x_1'})) = \ell\ell_R(0:x_1') = \ell\ell_R(0:x_1) = a_1.$
		
		Suppose $s >1$. By Corollary \ref{ex seq of homo}, we have the following exact sequence
		$$H_1(R_{x_1',\ldots,x_{s-1}'}) \xrightarrow{-x_s'} H_1(R_{x_1',\ldots,x_{s-1}'}) \rightarrow H_1(R_{x_1',\ldots,x_s'}) \rightarrow H_0(R_{x_1',\ldots,x_{s-1}'}) \xrightarrow{x_s'} H_0(R_{x_1',\ldots,x_{s-1}'}).$$
		This yields a short exact sequence
		$$0 \rightarrow H_1(R_{x_1',\ldots,x_{s-1}'})/x_s'H_1(R_{x_1',\ldots,x_{s-1}'}) \rightarrow H_1(R_{x_1',\ldots,x_s'}) \rightarrow \frac{(x_1',\ldots,x_{s-1}'):x_s'}{(x_1',\ldots,x_{s-1}')} \rightarrow 0.$$
		From this exact sequence and by induction we obtain
		\begin{align*}
			\ell\ell_R(H_1(R_{x_1',\ldots,x_s'})) & \leq \ell\ell_R(H_1(R_{x_1',\ldots,x_{s-1}'})) + \ell\ell_R\left( \frac{(x_1',\ldots,x_{s-1}'):x_s'}{(x_1',\ldots,x_{s-1}')}\right) \\
			&  \leq a_1 + 2a_2 + \cdots + 2^{s-2}a_{s-1} + 2^{s-1}a_s.
		\end{align*}
		The proof is complete.
	\end{proof}
	In order to construct the bound for Loewy length of the other Koszul homology modules we need the following notation.
	
	\begin{Notation} With the notation as in Proposition \ref{first loewy length} we define
		\begin{align*}
			n_1(i) &= a_1 + 2a_2 + \cdots + 2^{i-1}a_i, \ i=1,\ldots,s \\
			n_2(i) &= n_1(1) + n_1(2) + \cdots + n_1(i), \ i=1,\ldots,s \\
			\cdots & \\
			n_s(i) &= n_{s-1}(1) + n_{s-1}(2) + \cdots + n_{s-1}(i),\  i=1,\ldots,s. 
		\end{align*}
	\end{Notation}
	We note that these notations only depend on $a_1,\ldots,a_s$.
	\begin{Theorem}
		Let $x_1,\ldots,x_s$ be a filter regular sequence in $R$. For $i=1,\ldots,s$, let
		$$a_i = \ell\ell_R\left( \frac{(x_1,\ldots,x_{i-1}):x_i}{(x_1,\ldots,x_{i-1})}\right).$$
		Set $N=\max\{ a_1 + 2a_2 + \cdots + 2^{s-1}a_s, \ar_{\mm}(x_1),\ldots,\ar_{\mm}(x_1,\ldots,x_s)\} + 1$. Let $x_i' = x_i + \varepsilon_i$, where $\varepsilon_i$ is an arbitrary element in $\mm^N$, $i=1,\ldots,s$. Then
		$$\ell\ell_R(H_k(R_{x_1',\ldots,x_s'})) \leq n_k(s-k+1),$$
		for $k = 1,\ldots,s$.
	\end{Theorem}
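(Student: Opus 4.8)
The plan is to prove the bound by induction on the length $s$, establishing it for all $k=1,\ldots,s$ simultaneously. The whole argument is driven by the long exact sequence of Corollary~\ref{ex seq of homo}: applied to the complex $R_{x_1',\ldots,x_{s-1}'}$ and the element $x_s'$ and then split into short pieces, it produces, for every $k$, a short exact sequence
$$0 \to H_k(R_{x_1',\ldots,x_{s-1}'})/x_s'H_k(R_{x_1',\ldots,x_{s-1}'}) \to H_k(R_{x_1',\ldots,x_s'}) \to \bigl(0:_{H_{k-1}(R_{x_1',\ldots,x_{s-1}'})}x_s'\bigr) \to 0.$$
Because the Loewy length is subadditive along short exact sequences and does not increase under passage to submodules or quotients, this yields
$$\ell\ell_R\bigl(H_k(R_{x_1',\ldots,x_s'})\bigr) \le \ell\ell_R\bigl(H_k(R_{x_1',\ldots,x_{s-1}'})\bigr) + \ell\ell_R\bigl(0:_{H_{k-1}(R_{x_1',\ldots,x_{s-1}'})}x_s'\bigr).$$
The bookkeeping that makes the induction run is that every prefix $x_1,\ldots,x_t$ is again a filter regular sequence with the same invariants $a_1,\ldots,a_t$, and its threshold $\max\{a_1+2a_2+\cdots+2^{t-1}a_t,\ar_{\mm}(x_1),\ldots,\ar_{\mm}(x_1,\ldots,x_t)\}+1$ is at most $N$; hence $\varepsilon_1,\ldots,\varepsilon_t$ lie in $\mm^N$, hence in this smaller power of $\mm$, so Proposition~\ref{first loewy length}, Proposition~\ref{filter reular sequence}, and the inductive instance of the theorem all apply to $x_1,\ldots,x_t$. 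Finally each $n_j(i)$ depends only on $a_1,\ldots,a_i$, so it is unambiguous for prefixes, and the defining formulas give the telescoping identities $n_k(i)=n_k(i-1)+n_{k-1}(i)$ for $k\ge 2$ and $n_1(i)=n_1(i-1)+2^{i-1}a_i$, with $n_j(0)=0$.

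For $2\le k\le s$ the inductive hypothesis applied to the prefix $x_1,\ldots,x_{s-1}$ bounds the first summand by $n_k(s-k)$ — with the convention $n_k(0)=0$, consistent with the vanishing of $H_s(R_{x_1',\ldots,x_{s-1}'})$ when $k=s$ — and bounds the second summand, a submodule of $H_{k-1}(R_{x_1',\ldots,x_{s-1}'})$ with $1\le k-1\le s-1$, by $n_{k-1}(s-k+1)$; adding and telescoping gives exactly $n_k(s-k+1)$. The case $k=1$ is handled separately because $H_0(R_{x_1',\ldots,x_{s-1}'})=R/(x_1',\ldots,x_{s-1}')$ need not have finite Loewy length; there the third term of the short exact sequence is
$$0:_{H_0(R_{x_1',\ldots,x_{s-1}'})}x_s'=\frac{(x_1',\ldots,x_{s-1}'):x_s'}{(x_1',\ldots,x_{s-1}')},$$
whose Loewy length is at most $2^{s-1}a_s$ by Proposition~\ref{filter reular sequence}(i), so together with the inductive bound $n_1(s-1)$ on $H_1(R_{x_1',\ldots,x_{s-1}'})$ one gets $n_1(s-1)+2^{s-1}a_s=n_1(s)$. (This $k=1$ case is exactly Proposition~\ref{first loewy length}, which could instead be quoted directly.)

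The base case $s=1$ is immediate: $H_1(R_{x_1'})=(0:x_1')=(0:x_1)$ by Proposition~\ref{filter regular element}, so its Loewy length is $a_1=n_1(1)$. Assembling these estimates completes the induction. I do not expect a real obstacle beyond organization: the two points that demand care are checking that each prefix genuinely satisfies the hypotheses of the auxiliary propositions and of the theorem itself (so they may legitimately be invoked for it), and matching the telescoping of the $n_j(i)$ with the split exact sequence at the extreme values $k=1$ and $k=s$. No idea beyond Corollary~\ref{ex seq of homo}, subadditivity of the Loewy length, and Propositions~\ref{first loewy length} and~\ref{filter reular sequence} is required.
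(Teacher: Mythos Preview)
Your proposal is correct and follows essentially the same route as the paper: induction on $s$, the short exact sequence extracted from Corollary~\ref{ex seq of homo}, subadditivity of the Loewy length, and the telescoping identity $n_k(s-k)+n_{k-1}(s-k+1)=n_k(s-k+1)$, with $k=1$ handled via Proposition~\ref{first loewy length}. Your write-up is in fact more careful than the paper's about verifying that the prefix $x_1,\ldots,x_{s-1}$ satisfies the hypotheses of the theorem (so the inductive hypothesis legitimately applies) and about the boundary conventions $n_j(0)=0$ at $k=s$.
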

	\begin{proof}
		We will proceed by induction on $s$. The case $s=1$ is obvious.
		
		Suppose $s>1$. By Proposition \ref{first loewy length}, the assertion holds for $k=1$. For $k>1$, by Corollary \ref{ex seq of homo}, we have the following exact sequence.
		\begin{align*}
			H_k(R_{x_1',\ldots,x_{s-1}'}) \xrightarrow{(-1)^kx_s'} H_k(R_{x_1',\ldots,x_{s-1}'}) &\rightarrow H_k(R_{x_1',\ldots,x_s'}) \\
			&\rightarrow H_{k-1}(R_{x_1',\ldots,x_{s-1}'}) \xrightarrow{(-1)^{k-1}x_s'} H_{k-1}(R_{x_1',\ldots,x_{s-1}'}).
		\end{align*}
		This yields a short exact sequence
		$$0 \rightarrow H_k(R_{x_1',\ldots,x_{s-1}'})/x_s'H_k(R_{x_1',\ldots,x_{s-1}'}) \rightarrow H_k(R_{x_1',\ldots,x_s'}) \rightarrow (0 :_{H_{k-1}(R_{x_1',\ldots,x_{s-1}'})} x_s') \rightarrow 0.$$
		From this exact sequence and by induction we obtain
		\begin{align*}
			\ell\ell_R(H_k(R_{x_1',\ldots,x_s'})) & \leq \ell\ell_R(H_k(R_{x_1',\ldots,x_{s-1}'})) + \ell\ell_R(H_{k-1}(R_{x_1',\ldots,x_{s-1}'})) \\
			& \leq n_k(s-k) + n_{k-1}(s-k+1) = n_k(s-k+1).
		\end{align*}	
		The proof is complete.
	\end{proof}
	
	\newpage

\end{document}